\documentclass[11pt,a4paper]{amsart}

\usepackage{amsmath,amssymb,amsthm}
\usepackage{euscript}
\usepackage{a4wide}
\usepackage{enumitem}
\usepackage{url}
\usepackage[colorlinks=true,linkcolor=blue,citecolor=blue,urlcolor=blue]{hyperref}
\usepackage{comment}

\theoremstyle{plain}
\newtheorem{theorem}{Theorem}[section]
\newtheorem{proposition}[theorem]{Proposition}
\newtheorem{lemma}[theorem]{Lemma}
\newtheorem{corollary}[theorem]{Corollary}

\theoremstyle{definition}
\newtheorem{definition}[theorem]{Definition}
\newtheorem{remark}[theorem]{Remark}

\newtheorem{question}[theorem]{Question}

\newtheorem{theoremA}{Theorem}

\numberwithin{equation}{section}

\newcommand{\Bop}{\EuScript{B}}
\newcommand{\ME}{\EuScript{M}_E}
\newcommand{\MJ}{\EuScript{M}_J}
\newcommand{\ER}{E_{\mathrm R}}
\newcommand{\IR}{\EuScript{I}_{\mathrm R}}
\newcommand{\JR}{\EuScript{J}_{\mathrm R}}
\newcommand{\Kideal}{\EuScript{K}}
\newcommand{\Wideal}{\EuScript{W}}
\newcommand{\ideal}[1]{\EuScript{#1}}
\newcommand{\Id}{\mathrm{Id}}
\newcommand{\inv}{\mathrm{inv}}
\newcommand{\C}{\mathbb C}
\newcommand{\K}{\mathbb K}
\newcommand{\N}{\mathbb N}
\newcommand{\norm}[1]{\lVert #1\rVert}

\newcommand{\cpi}{C_{\mathrm{pi}}}
\DeclareMathOperator{\ind}{ind}

\renewcommand{\le}{\leqslant}
\renewcommand{\ge}{\geqslant}
\renewcommand{\leq}{\leqslant}
\renewcommand{\geq}{\geqslant}

\subjclass[2020]{Primary 46H10, 47L10; Secondary 46B03, 46H15, 46M07}
\keywords{primary factorisation property, purely infinite Banach algebra, Dedekind-infinite algebra, properly infinite algebra, ultrapower, maximal ideal}
\dedicatory{Dedicated to Niels Jakob Laustsen, whose mentorship shaped all three authors.}
\begin{document}

\title[Pure infiniteness and primary factorisation]{Pure infiniteness and primary factorisation}

\author[A. Acuaviva]{Antonio Acuaviva}
\address[A. Acuaviva]{School of Mathematical Sciences,
Fylde College,
Lancaster University,
LA1 4YF,
United Kingdom}
\email{ahacua@gmail.com}

\author[B. Horv\'ath]{Bence Horv\'ath}
\address[B. Horv\'ath]{Helvetia Insurance Ltd., St. Alban-Anlage~26, 4002 Basel, Switzerland}
\email{hotvath@gmail.com}

\author[T.~Kania]{Tomasz Kania}
\address[T.~Kania]{Mathematical Institute\\Czech Academy of Sciences\\\v Zitn\'a 25 \\115 67 Praha 1\\Czech Republic  and  Institute of Mathematics and Computer Science\\ Jagiellonian University\\ {\L}ojasiewicza 6, 30-348 Krak\'{o}w, Poland
}
\email{kania@math.cas.cz, tomasz.marcin.kania@gmail.com}
\thanks{RVO: 67985840.}

\date{\today}

\begin{abstract}
We show that there is no real or complex indecomposable Banach space with the primary factorisation property (PFP). We relate the PFP of a Banach space $E$ to ring-theoretic infiniteness of $\Bop(E)$ and of $\Bop(E)/\ME$, where $\ME$ denotes the set of operators not factoring the identity on $E$, in the case it is the unique maximal ideal of $\Bop(E)$. For complex $E$ with the PFP, this quotient is purely infinite exactly when it is not scalar. We isolate the quantitative gap relevant to ultrapowers, identify classical sequence spaces as positive non-scalar cases, and show that Read's space $\ER$ does not have the uniform PFP.
\end{abstract}

\maketitle

\section{Introduction}

The primary factorisation property is a strong operator-theoretic form of primariness.  A~Banach space $E$ has this property when, for every operator $T\in\Bop(E)$, the identity on $E$ factors through $T$ or through $\Id_E-T$.  Thus every operator determines a factorisation dichotomy.  Such dichotomies are often used to prove that a Banach space is primary, but they also have consequences for the ring structure of $\Bop(E)$ and of its Calkin-type quotients.

The starting point of this note is the ideal-like set
\[
  \ME=\{T\in\Bop(E):\ \Id_E\hbox{ does not factor through }T\}.
\]
Under the primary factorisation property this set is the largest proper two-sided ideal of $\Bop(E)$.  The quotient
\[
  A_E:=\Bop(E)/\ME
\]
is therefore a unital Banach algebra. This object will be of central importance in the proof of our first result, which establishes that the PFP and indecomposability are mutually exclusive properties in both real and complex Banach spaces:

\begin{theoremA}\label{thm:intro-no_indec_pfp}
    There is no real or complex indecomposable Banach space with the PFP.
\end{theoremA}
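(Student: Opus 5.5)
The plan is to show first that, on an indecomposable space, the primary factorisation property forces every Fredholm operator to have index $0$. A rank-one projection will then contradict the property directly. Throughout, indecomposable means infinite-dimensional and not the direct sum of two infinite-dimensional closed subspaces. Hence every idempotent $P\in\Bop(E)$ has finite rank or finite corank.

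\emph{Step 1: factoring the identity forces Fredholmness.} Suppose $\Id_E=ATB$ with $A,B,T\in\Bop(E)$. Then $P=TBA$ and $Q=BAT$ are idempotents. The range of $Q$ equals the range of $B$, which is isomorphic to $E$ because $B$ has the left inverse $AT$. Likewise the range of $P$ equals the range of $TB$, which is isomorphic to $E$. Both ranges are infinite-dimensional, so by indecomposability $\ker P$ and $\ker Q$ are finite-dimensional. Thus $TBA=\Id_E-F$ and $BAT=\Id_E-F'$ with $F,F'$ of finite rank, so $T$ is Fredholm. The same argument shows that $B$ is an injective Fredholm operator and $A$ a surjective Fredholm operator. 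Consequently, under the PFP, for every $T\in\Bop(E)$ the image of $T$ or of $\Id_E-T$ is invertible in the Calkin algebra $\mathcal C(E)=\Bop(E)/\Kideal(E)$.

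\emph{Step 2: all Fredholm operators have index $0$.} This is the step I expect to be the main obstacle. By Step 1, $\mathcal C(E)$ is a unital Banach algebra in which, for every $a$, either $a$ or $1-a$ is invertible. I would check the standard ring-theoretic fact that such a ring is local: the non-invertible elements form the unique maximal ideal $J$, which is closed. Then $\mathcal C(E)/J$ is a Banach division algebra.
\begin{itemize}
\item In the complex case Gelfand--Mazur gives $\mathcal C(E)/J\cong\C$. Every Fredholm $T$ then has the form $\lambda\Id_E+S$ with $\lambda\neq0$ and $S$ in the preimage of $J$. Along the path $t\mapsto\lambda\Id_E+tS$ every operator stays Fredholm, since its image is invertible modulo $J$. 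Local constancy of the index then gives $\ind T=\ind(\lambda\Id_E)=0$.
\item In the real case the quotient is $\mathbb R$, $\C$ or the quaternions. Its group of units has at most the two components of $\pm1$, and it can be reached from $\pm1$ by paths. The same path argument, using $\ind(-\Id_E)=0$, again yields index $0$.
\end{itemize}
The delicate points are that invertibility modulo $J$ really implies invertibility in $\mathcal C(E)$, which holds because $J$ consists of non-invertibles in a local ring, and that paths in the quotient lift to norm-continuous paths of operators.

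\emph{Step 3: the contradiction.} Let $P$ be a rank-one projection. The identity cannot factor through $P$, as $E$ is infinite-dimensional. So by the PFP there are $A,B$ with $A(\Id_E-P)B=\Id_E$. By Step 1, $B$ is injective and Fredholm. By Step 2 it has index $0$, hence it is invertible. Then $A(\Id_E-P)=B^{-1}$ is injective. This is impossible, because $\Id_E-P$ annihilates the range of $P$. Hence no indecomposable space, real or complex, has the PFP.
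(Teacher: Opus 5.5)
Your argument is correct and follows essentially the paper's route. On an indecomposable space, factoring the identity forces Fredholmness; a real or complex Gelfand--Mazur division quotient plus a path argument gives every Fredholm operator index $0$; and a rank-one projection then yields the contradiction, where your invertibility of $B$ is a variant of the paper's observation that $L=(\Id_E-P)V$ is injective Fredholm of negative index.

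The only real difference is how the division algebra is built. You take the Calkin algebra modulo its non-invertibles and use the local-ring criterion, whereas the paper uses $A_E=\Bop(E)/\ME$. Under the hypotheses these are the same algebra, since the non-Fredholm operators are exactly $\ME$.
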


As the first and third-named authors already noted at the beginning of~\cite[Section~2.2]{AcuavivaKania2026}, the PFP is intimately related to the primeness and primariness properties of Banach spaces. Indeed, \textit{en route} to proving primariness of a Banach space, the first step is often to show that it has the PFP. It is thus worth contrasting the above theorem with a deep result of Gowers and Maurey from~\cite{GowersMaurey}, who constructed an indecomposable, prime Banach space.
\smallskip

It is natural to ask which ring-theoretic infiniteness properties of $\Bop(E)$ and $A_E$ are forced by the factorisation dichotomy.

Our next result collects the elementary infiniteness consequences of the PFP.  We use the definition of pure infiniteness for unital complex Banach algebras adopted by Daws and the second-named author: $A$ is purely infinite when $A\not\simeq\C$ and, for each non-zero $a\in A$, there are $b,c\in A$ such that $bac=1$.

\begin{theoremA}\label{thm:intro-main}
Let $E$ be a Banach space.
\begin{enumerate}[label=\textup{(\roman*)}]
\item If $E$ is infinite-dimensional and has the primary factorisation property, then $\Bop(E)$ is Dedekind-infinite.
\item The preceding conclusion cannot in general be strengthened to proper infiniteness: $C[0,\omega_1]$ has the primary factorisation property, but $\Bop(C[0,\omega_1])$ is not properly infinite.
\item If $E$ is complex and has the primary factorisation property, then every non-zero element of $A_E=\Bop(E)/\ME$ has the identity factoring through it.  Consequently, $A_E$ is purely infinite if and only if $A_E\not\simeq\C$.
\end{enumerate}
\end{theoremA}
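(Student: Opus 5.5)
For part (i) I would use a rank-one projection. Since $E$ is infinite-dimensional, fix a rank-one idempotent $P\in\Bop(E)$. The identity on an infinite-dimensional space cannot factor through a finite-rank operator. So the primary factorisation property, applied to $P$, gives operators $A,B\in\Bop(E)$ with $B(\Id_E-P)A=\Id_E$. Put $a=(\Id_E-P)A$ and $b=B$. Then $ba=\Id_E$, and $ab$ is an idempotent whose range lies in $\ker P$, a proper subspace. Hence $ab\neq\Id_E$, so $\Bop(E)$ is Dedekind-infinite.

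For part (ii) I would rely on the known structure of $\Bop(C[0,\omega_1])$. By Loy--Willis, it has a character $\varphi$. By Kania--Laustsen, $\ker\varphi$ is the unique maximal ideal and consists exactly of the operators not fixing a copy of $C[0,\omega_1]$ complementedly; that is, $\ker\varphi=\ME$. Granting this identification, the primary factorisation property is immediate. Since $\varphi(T)+\varphi(\Id-T)=1$, at least one of $T$ and $\Id-T$ lies outside $\ker\varphi=\ME$, and the identity factors through that operator.

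To see that $\Bop(C[0,\omega_1])$ is not properly infinite, suppose it were. Then there are idempotents $p,q$ with $pq=qp=0$, and operators $a_i,b_i$ with $b_1a_1=b_2a_2=\Id$, $a_1b_1=p$ and $a_2b_2=q$. Applying the character gives $\varphi(p)=\varphi(a_1)\varphi(b_1)=\varphi(b_1a_1)=1$, and likewise $\varphi(q)=1$. But then $0=\varphi(pq)=1$, a contradiction. The main point to check here is the cited identification $\ME=\ker\varphi$. I would quote it directly from the literature rather than reprove it.

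For part (iii) I would argue directly. By the remark preceding the theorem, $\ME$ is a proper two-sided ideal under the primary factorisation property, so $A_E$ is a unital Banach algebra with unit $\Id_E+\ME$. Let $T+\ME$ be a non-zero element. Then $T\notin\ME$, so there exist $R,S$ with $RTS=\Id_E$, and passing to the quotient gives $(R+\ME)(T+\ME)(S+\ME)=1$. The equivalence follows from the definition adopted above: pure infiniteness means $A\not\simeq\C$ together with this factorisation condition, and we have just shown the factorisation condition always holds. Complexness is needed only because that definition is formulated for complex algebras. Neither (i) nor (iii) should present any obstacle beyond bookkeeping.
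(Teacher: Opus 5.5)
Your proposal is correct and follows the paper's proof essentially step for step: a rank-one idempotent $P$ and the resulting factorisation through $\Id_E-P$ in (i), the Kania--Laustsen identification of the Loy--Willis ideal with $\ME$ followed by passage to the scalar quotient in (ii), and lifting a factorisation of $\Id_E$ to $A_E$ in (iii). The only differences are cosmetic. In (i) you use the range-side idempotent $(\Id_E-P)AB$, whereas the paper uses the kernel-side idempotent $VU(\Id_E-P)$ from Lemma~\ref{lemma: idempotents_from_factorisation}. In (iii), calling $A_E$ a unital Banach algebra implicitly uses that $\ME$ is closed, not merely proper; this closedness is supplied by Proposition~\ref{prop:PFP-maximal-ideal}.
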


The proof of part~\textup{(i)} is very short: applying the factorisation dichotomy to a rank-one projection forces the identity to factor through the complementary projection, and hence produces an idempotent which is Murray--von Neumann equivalent to the identity but is not equal to it.  For part~\textup{(ii)}, the obstruction is the Loy--Willis quotient.  The third-named author and Laustsen identified the corresponding codimension-one ideal as the set of operators through which the identity does not factor, so $\Bop(C[0,\omega_1])/\ME\simeq\C$; proper infiniteness would pass to this quotient, but $\C$ is not properly infinite.  Part~\textup{(iii)} is proved by passing any factorisation of $\Id_E$ through an operator representative to the quotient.

Read's space gives an instructive negative example for the uniform version.

\begin{theoremA}\label{thm:intro-Read-not-UPFP}
Read's space $\ER$ does not have the uniform primary factorisation property.
\end{theoremA}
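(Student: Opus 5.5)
The plan is to exploit the explicit structure of the Calkin-type quotient of $\Bop(\ER)$ coming from Read's construction. I will take as given from the literature the facts I need. First, the weakly compact operators $\Wideal(\ER)$ form an ideal and
\[
\Bop(\ER)/\Wideal(\ER)\simeq \C 1\oplus \ell_2 ,
\]
with $\ell_2$ carrying the zero product, via a topological algebra isomorphism $\pi$. Write $\pi(T)=(\varphi(T),\tau(T))$, where $\varphi$ is a character. Second, the identity factors through $T$ exactly when $\varphi(T)\neq 0$, so $\ME=\ker\varphi$. This second fact is what I expect to need to check or cite carefully. If it holds, then $\ER$ at least has the PFP, since $\varphi(T)$ and $\varphi(\Id-T)$ cannot both vanish. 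The task is then to show that the factorisation constants cannot be bounded uniformly.

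I argue by contradiction. Suppose there is a constant $C$ such that for every $T$ one of $T$, $\Id-T$ admits $A,B$ with $ATB=\Id$ and $\norm{A}\norm{B}\le C$. Consider test operators $T_n$ with $\pi(T_n)=(\tfrac12,t_n)$, where $t_n\in\ell_2$ and $\norm{t_n}\to\infty$; these are obtained by lifting through $\pi$. Then $\pi(\Id-T_n)=(\tfrac12,-t_n)$, so both candidates have the same shape.

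Suppose $ATB=\Id$ with $\pi(A)=(\alpha,a)$, $\pi(T)=(\tfrac12,t)$, $\pi(B)=(\beta,b)$. Because the $\ell_2$-part has zero product, applying $\pi$ gives two equations:
\[
\tfrac12\alpha\beta=1,\qquad \alpha\beta\, t+\tfrac12(\beta a+\alpha b)=0 .
\]
First rescale so that $\norm{A}=\norm{B}\le\sqrt C$. Since $\pi$ is bounded, $|\alpha|,|\beta|,\norm a,\norm b\le K\sqrt C$ for a constant $K$ depending only on $\pi$. The equations then give
\[
\norm t=\frac{\norm{\beta a+\alpha b}}{2|\alpha\beta|}\le \frac{K^2C}{2},
\]
a bound independent of $n$. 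This contradicts $\norm{t_n}\to\infty$, whichever of $T_n$, $\Id-T_n$ was chosen.

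The main obstacle is the normalisation in the definition of the uniform PFP. If the definition only quantifies over $T$ of norm at most $1$ (or at most some fixed $M$), then $\norm{t_n}$ cannot be made large, and the scaling argument above fails. In that case I would first try to use the quantitative reformulation isolated for ultrapowers: the uniform PFP for $\ER$ should pass to an ultrapower $\ER^{\mathcal U}$ as the plain PFP. I would then look for an element of the ultrapower whose two factorisation costs are forced apart. Such an element would be built from operators whose character value is bounded away from $0$ and $1$ but whose "$\ell_2$-part" witnesses, in the limit, a non-factorisable obstruction.

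A second point to verify is that $\varphi(T)\neq0$ really implies the identity factors through $T$, rather than only through $T$ modulo $\Wideal$. This is where Read's description of $\Bop(\ER)$ as scalars plus an $\ell_2$-part plus weakly compacts, together with the fact that $\Wideal(\ER)$ lies in $\ME$, would be used. I would check both of these before running the estimate above.
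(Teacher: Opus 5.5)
Your argument is correct and is essentially the paper's proof. Both pass to the weak Calkin algebra and use the square-zero codimension-one ideal there to linearise a factorisation $UTV=\Id_{\ER}$, which bounds the $\ell_2$-part of the test operator by a multiple of the factorisation constant. The paper packages this as Proposition~\ref{prop:square-zero-obstruction}, applied with $\ideal I=\JR$ and $\ideal N=\Wideal(\ER)$, and tests $\Id_E-tS$ with $S\in\ideal I$; it discards the alternative $tS$ because $tS$ lies in a proper ideal, whereas your choice of scalar part $\tfrac12$ handles both alternatives by the same estimate.

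Neither of your worries is real. The $C$-PFP quantifies over all $T$ with no normalisation, so your scaling argument applies directly. Your estimate also never uses that $\varphi(T)\neq0$ forces a factorisation of $\Id_{\ER}$ through $T$. That is just as well: this statement is equivalent to $\ER$ having the PFP, which is open (Question~\ref{q:Read-PFP}), so it could be neither checked nor cited.
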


The proof is not a gliding-hump construction.  It uses the codimension-one ideal isolated by Read.  Modulo the weakly compact operators this ideal becomes a non-zero square-zero ideal, and such a square-zero layer is incompatible with the uniform estimates in the UPFP.

The remaining issue concerns ultrapowers.  For a unital Banach algebra $A$, put
\[
  \cpi^A(a)=\inf\{\norm b\norm c:\ b,c\in A,\ bac=1\}\qquad(a\neq0),
\]
with value $\infty$ if no such $b,c$ exist. Daws and the second-named author showed that, for a~countably incomplete ultrafilter $\mathcal U$, the ultrapower $(A)^{\mathcal U}$ is purely infinite precisely when $\cpi^A$ is uniformly bounded on the unit sphere, provided $A\not\cong\C$.

If $E$ has the uniform primary factorisation property with constant $C$, then the quotient $A_E$ satisfies only the weaker assertion
\[
  \cpi^{A_E}(a)\le C\quad\hbox{or}\quad \cpi^{A_E}(1-a)\le C.
\]
This does not formally yield a uniform bound for $\cpi^{A_E}(a)$ itself.  We therefore formulate the non-scalar ultrapower problem separately.  The scalar-quotient examples $C[0,\omega_1]$ and $J$ should be viewed as boundary cases showing that the non-scalar hypothesis is essential, not as satisfying resolutions of the ultrapower problem.

The classical sequence spaces lie on the other side.  For $E=c_0(\Gamma)$ or $E=\ell_p(\Gamma)$, $1\le p<\infty$, with $\kappa=|\Gamma|$, one has $\ME=\Kideal_\kappa(E)$, and the quotient $\Bop(E)/\Kideal_\kappa(E)$ has pure infiniteness constant one on its unit sphere.  Consequently, every countably incomplete ultrapower of this quotient is purely infinite.  These examples emphasise that a negative solution of the non-scalar question would require genuinely non-uniform complemented-copy behaviour.

The final new test case is Read's space $\ER$.  Read constructed $\ER$ so that $\Bop(\ER)$ admits a~discontinuous derivation, and hence a discontinuous homomorphism into a Banach algebra.  The key point for the present paper is that Read's codimension-one ideal has a large square-zero image modulo the weakly compact operators.  We prove the following general obstruction: if $\Bop(E)$ has a codimension-one ideal $\ideal I$ and a closed ideal $\ideal N\subsetneq\ideal I$ with $\ideal I^2\subseteq\ideal N$, then $E$ cannot have the UPFP.  Applying this to Read's ideals shows that $\ER$ does not have the UPFP.

Laustsen and Skillicorn later described the weak Calkin algebra by a strongly split exact sequence
\[
  0\longrightarrow \Wideal(\ER)\longrightarrow \Bop(\ER)\longrightarrow \widetilde{\ell}_2\longrightarrow0,
\]
where $\widetilde{\ell}_2$ is the unitisation of the Hilbert space $\ell_2$ with zero product.  Using this description, we also show that the remaining PFP question for $\ER$ is equivalent to the assertion that every weakly compact perturbation $\Id_{\ER}-W$ fixes a complemented copy of $\ER$ in the appropriate factorisation sense.  If this assertion were proved, Read's space would answer a question of first- and third-named authors by giving a Banach space with the PFP but without the UPFP.  It would not, however, solve the non-scalar ultrapower question because the resulting quotient would be $\C$.

\section{Preliminaries}

Throughout, $E$ denotes a Banach space over $\K\in\{\mathbb R,\C\}$. The Banach space of bounded linear operators from a Banach space~$E$ to a Banach space~$F$ is denoted by~$\Bop(E,F)$. We put~$\Bop(E):=\Bop(E,E)$, which is a unital Banach algebra with multiplicative identity~$\Id_E$, where~$\Id_E$ is the identity operator on $E$.  We say that an operator $S\in\Bop(E)$ \emph{factors through} $T\in\Bop(E)$ if there are $U,V\in\Bop(E)$ such that
\[
  S=UTV.
\]

\begin{definition}
A Banach space $E$ has the \emph{primary factorisation property}, abbreviated PFP, if for every $T\in\Bop(E)$ the identity $\Id_E$ factors through $T$ or through $\Id_E-T$.

For $C\ge1$, the space $E$ has the \emph{$C$-primary factorisation property}, abbreviated $C$-PFP, if for every $T\in\Bop(E)$ there are $U,V\in\Bop(E)$ with
\[
  UTV=\Id_E\qquad\hbox{or}\qquad U(\Id_E-T)V=\Id_E
\]
and $\norm U\norm V\le C$.  The space $E$ has the \emph{uniform primary factorisation property}, abbreviated UPFP, if it has the $C$-PFP for some $C\ge1$.
\end{definition}

We record an elementary lemma here that will be used repeatedly throughout the paper.

\begin{lemma}\label{lemma: idempotents_from_factorisation}
    Let~$E$ be a Banach space and let~$T,U,V\in\Bop(E)$ such that~$\Id_E=UTV$. Then~$P:=VUT$ and~$Q:=TVU$ are idempotents in~$\Bop(E)$ with~$P[E] \simeq E \simeq Q[E]$, and~$\ker T \subseteq \ker P$ and~$Q[E] \subseteq T[E]$.
\end{lemma}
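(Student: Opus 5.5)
The plan is to verify each claim directly from the single relation $UTV=\Id_E$; every step is an algebraic manipulation.

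\emph{Idempotents.} Using $UTV=\Id_E$ we get
\[
  P^2=VUT\,VUT=V(UTV)UT=VUT=P
\]
and
\[
  Q^2=TVU\,TVU=TV(UTV)U=TVU=Q.
\]
So $P$ and $Q$ are idempotents in $\Bop(E)$.

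\emph{Isomorphisms of the ranges.} We show that the restrictions of suitable operators give mutually inverse isomorphisms.
\begin{itemize}
\item For $P[E]$, consider $UT|_{P[E]}\colon P[E]\to E$ and $V\colon E\to P[E]$. The map $V$ lands in $P[E]$ because $PV=VUTV=V$. On $E$ we have $UT\circ V=UTV=\Id_E$. For $x=Py\in P[E]$ we have $V(UTx)=Px=x$. Hence $P[E]\simeq E$.
\item For $Q[E]$, use $U|_{Q[E]}\colon Q[E]\to E$ and $TV\colon E\to Q[E]$. The map $TV$ lands in $Q[E]$ because $QTV=TVUTV=TV$. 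We have $U\circ TV=\Id_E$. For $x\in Q[E]$ we have $TVUx=Qx=x$. Hence $Q[E]\simeq E$.
\end{itemize}
Both ranges are closed, being ranges of bounded idempotents, so these are isomorphisms of Banach spaces.

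\emph{Kernel and range inclusions.} If $Tx=0$, then $Px=VU(Tx)=0$, so $\ker T\subseteq\ker P$. Also $Q[E]=T\bigl(VU[E]\bigr)\subseteq T[E]$.

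There is no real obstacle here. The only point needing care is checking that $V$ and $TV$ actually map into the ranges of $P$ and $Q$, which the identities $PV=V$ and $QTV=TV$ settle.
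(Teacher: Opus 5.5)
Your proof is correct and follows essentially the same route as the paper. The paper's mutually inverse maps $UTP|_{P[E]}$ and $P|^{P[E]}V$ coincide with your $UT|_{P[E]}$ and $V$ because $PV=V$, and you also write out the $Q$ case, which the paper only calls analogous.
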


\begin{proof}
    The containments~$\ker T \subseteq \ker P$ and~$Q[E] \subseteq T[E]$ are immediate from the definitions of~$P$ and~$Q$. We note that
    \[
    P^2 = (VUT)(VUT) = V(UTV)UT = VUT = P,
    \]
    proving the idempotent property for~$P$. Set
    \[
    A:=UTP|_{P[E]}\quad \text{and} \quad B:=P|^{P[E]}V,
    \]
    it is clear that~$A \in \Bop(P[E],E)$ and~$B \in \Bop(E, P[E])$. We also see that
    \[
    AB = UTP|_{P[E]} P|^{P[E]}V = UTPV = UT(VUT)V = (UTV)(UTV) = \Id_E,
    \]
    and
    \[
    BA = P|^{P[E]}VUTP|_{P[E]} = P|^{P[E]}PP|_{P[E]} = \Id_{P[E]}.
    \]
    This yields~$P[E] \simeq E$, as claimed. The isomorphism~$Q[E] \simeq E$ follows from an analogous argument.
\end{proof}

For a Banach space $E$, set
\begin{equation}\label{eq:ME-definition}
  \ME=\{T\in\Bop(E):\ \Id_E\hbox{ does not factor through }T\}.
\end{equation}
The following elementary observation is useful throughout.

\begin{proposition}\label{prop:PFP-maximal-ideal}
Let~$E$ be a Banach space. The set~$\ME$ is closed in the operator norm. The Banach space~$E$ has the PFP if and only if~$\ME$ is the largest proper ideal of~$\Bop(E)$. In this case $\ME$ is the unique maximal ideal of~$\Bop(E)$.
\end{proposition}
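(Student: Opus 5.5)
Closedness of $\ME$ comes first. I will show its complement is open: if $\Id_E=UTV$ and $\norm{S-T}<(\norm U\norm V)^{-1}$ (with $U,V\neq 0$, which is automatic when $E\neq\{0\}$), then $\norm{\Id_E-USV}=\norm{U(T-S)V}<1$. Hence $USV$ is invertible, and $\Id_E=(USV)^{-1}USV$ shows that $\Id_E$ factors through $S$. So the complement of $\ME$ is open and $\ME$ is closed. The degenerate case $E=\{0\}$ is trivial.

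Next I record two general facts about $\ME$, valid for every Banach space. (a) $\ME$ is closed under multiplication by arbitrary operators on either side: if $\Id_E=U(ATB)V$, then $\Id_E=(UA)T(BV)$, so $T\notin\ME$ implies $ATB\notin\ME$; contrapositively $T\in\ME$ implies $ATB\in\ME$. (b) Every proper two-sided ideal $\ideal J$ is contained in $\ME$: if $T\in\ideal J\setminus\ME$, then $\Id_E=UTV\in\ideal J$, so $\ideal J=\Bop(E)$. Also $\Id_E\notin\ME$. Consequently, $\ME$ is the largest proper ideal as soon as it is closed under addition, and then it is automatically the unique maximal ideal, since every proper ideal (in particular every maximal one) lies inside it.

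For the forward direction, assume the PFP and let $S,T\in\ME$. Suppose $S+T\notin\ME$, say $\Id_E=U(S+T)V$. Apply the PFP to the operator $R:=USV$. Then $\Id_E-R=UTV$. Either $\Id_E$ factors through $USV$, which by (a) forces $S\notin\ME$, or it factors through $UTV$, which forces $T\notin\ME$. Both contradict the assumption, so $S+T\in\ME$. Scalar multiples are handled by (a) with $A=\lambda\Id_E$. Thus $\ME$ is an ideal, and it is proper since $\Id_E\notin\ME$.

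For the converse, suppose $\ME$ is a proper ideal. If some $T$ had both $T\in\ME$ and $\Id_E-T\in\ME$, then $\Id_E=T+(\Id_E-T)\in\ME$, a contradiction. Hence the PFP holds. The main step of the whole argument is the additivity in the forward direction, and the trick of applying the PFP to $USV$ rather than to $S$ handles it.
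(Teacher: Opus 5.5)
Your proof is correct and follows essentially the same route as the paper. Closedness comes from openness of the invertible group: you make the neighbourhood explicit, while the paper argues with a convergent sequence. Additivity comes from applying the PFP to $USV$, so that $\Id_E-USV=UTV$, and the converse from writing $\Id_E=T+(\Id_E-T)$.
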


\begin{proof}
We first show that set~$\ME$ is closed in the operator norm. Indeed, suppose~$T_n\to T$ in norm and~$T\notin\ME$. Choose~$A,B\in\Bop(E)$ such that~$ATB=\Id_E$. Then~$AT_nB\to\Id_E$, and as the group of invertible elements in~$\Bop(E)$ is an open set in the operator norm, $AT_nB$ is invertible for all sufficiently large~$n \in \mathbb{N}$. Consequently
\[
\Id_E = (AT_nB)^{-1}(AT_nB) = ((AT_nB)^{-1}A)T_nB,
\]
proving that $\Id_E$ factors through $T_n$ for all sufficiently large $n\in \mathbb{N}$.

Now suppose~$E$ has the PFP.  Plainly $\Id_E\notin\ME$.  The set $\ME$ is stable under left and right multiplication by arbitrary operators: if $\Id_E$ factors through $ATB$, then it factors through $T$.

We check additivity.  Let $S,T\in\ME$ and suppose that $S+T\notin\ME$.  Then there are $A,B\in\Bop(E)$ such that
\[
  A(S+T)B=\Id_E.
\]
Apply the PFP to $ASB$.  Since
\[
  \Id_E-ASB=ATB,
\]
the identity factors through $ASB$ or through $ATB$, and therefore through $S$ or through $T$.  This contradicts $S,T\in\ME$.  Hence $S+T\in\ME$, and $\ME$ is a proper two-sided ideal.

Finally, if $\ideal{J}$ is a proper ideal of $\Bop(E)$ and $T\in\ideal{J}$, then $T\in\ME$; otherwise $\Id_E$ would factor through $T$, forcing $\Id_E\in\ideal{J}$.  Thus every proper ideal is contained in $\ME$, so $\ME$ is the largest proper ideal.

Conversely, suppose that $\ME$ is a maximal ideal.  If the PFP failed, then for some $T\in\Bop(E)$ both $T$ and $\Id_E-T$ would belong to $\ME$.  Since $\ME$ is an ideal, it is a linear subspace, and hence
\[
  \Id_E=T+(\Id_E-T)\in\ME,
\]
contradicting properness.
\end{proof}

We recall the ring-theoretic terminology used below. In a unital algebra~$A$, the group of invertible elements is denoted by~$\inv A$. If~$A$ is a unital Banach algebre, then~$\inv A$ is an open subset of~$A$.

For two idempotents $p$ and~$q$ in a unital algebra $A$, we write $p\sim q$ when they are \textit{algebraically Murray--von Neumann equivalent}, that is, when there are~$x,y\in A$ such that~$p=xy$ and~$q=yx$. The idempotents~$p,q \in A$ are \textit{orthogonal} if~$pq=0=qp$.

\begin{definition}
A unital algebra $A$ is \emph{Dedekind-infinite} if there are $a,b\in A$ such that
\[
  ab=1_A\qquad\hbox{and}\qquad ba\neq1_A.
\]
Equivalently, $1_A$ is Murray--von Neumann equivalent to an idempotent different from $1_A$.

A unital algebra $A$ is \emph{properly infinite} if there are orthogonal idempotents $p,q\in A$ such that
\[
  p\sim1_A\sim q.
\]
Thus proper infiniteness implies Dedekind-infiniteness.
\end{definition}

Proper infiniteness is preserved by unital homomorphisms.  Indeed, the images of the two orthogonal idempotents witnessing proper infiniteness remain orthogonal idempotents equivalent to the identity.

For complex Banach algebras we use the following definition of pure infiniteness.

\begin{definition}\label{def:pure-infinite}
A unital complex Banach algebra $A$ is \emph{purely infinite} if $A\not\simeq\C$ and, for every non-zero $a\in A$, there exist $b,c\in A$ such that
\[
  bac=1_A.
\]
\end{definition}

This is the definition used by Daws and the second-named author for unital Banach algebras \cite{DawsHorvath2022}.  It implies simplicity: if a non-zero ideal contains $a$, then it contains $1_A$.

\section{PFP and indecomposability through a Fredholm-theoretic lens}

The main result in this section is that an indecomposable Banach space (real or complex) cannot have the PFP. We first record a generic lemma that will be useful later on. In what follows $\mathbb{H}$ denotes the quaternions.

\begin{lemma}\label{lemma: same_component}
    Let $\mathbb{K} \in \{\mathbb{R}, \mathbb{C} \}$ and let $\mathbb{F} \in \{\mathbb{R}, \mathbb{C}, \mathbb{H} \}$. Let~$E$ be a Banach space over~$\mathbb{K}$ such that there exists a continuous, unital algebra homomorphism~$\varphi \colon \Bop(E) \to \mathbb{F}$. Let~$T \in \Bop(E) \setminus \ker \varphi$. There exists a continuous map~$\Gamma_T \colon [0,1] \to \Bop(E) \setminus \ker \varphi$ with $\Gamma_T(1)= \varepsilon \cdot \Id_E$ and~$\Gamma_T(0)= T$, where $\varepsilon \in \{1, -1\}$.
\end{lemma}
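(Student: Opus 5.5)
Write $\lambda:=\varphi(T)\in\mathbb F\setminus\{0\}$. The plan is to reduce the problem to path-connectedness inside the group $\mathbb F\setminus\{0\}$. The path will have two pieces: first move $T$ to a scalar multiple of the identity, then move that scalar to $\pm1$. The only tool is that $\varphi$ is unital and continuous, so $\varphi(t\,\Id_E)=t$ for real $t$ and, when $\mathbb K=\C$, also $\varphi(\mu\Id_E)=\mu\varphi(\Id_E)$ with $\mu\in\C$ viewed inside $\mathbb F$. The $\C$-linearity here needs care, but on real scalars it is automatic, and we only use real scalars.

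\emph{Step 1 (linear homotopy).} Consider the straight segment
\[
  \Gamma_1(s)=(1-s)T+s\,r\,\Id_E, \qquad s\in[0,1],
\]
where $r\in\mathbb R\setminus\{0\}$ is a real number to be chosen. Since $\varphi$ is real-linear, $\varphi(\Gamma_1(s))=(1-s)\lambda+sr$. This is nonzero for every $s\in[0,1]$ exactly when the segment from $\lambda$ to $r$ in $\mathbb F$ avoids $0$. That fails only if $\lambda$ is a negative real multiple of $r$. So we choose $r=1$ unless $\lambda$ is a negative real, and $r=-1$ if it is. In either case the segment avoids $0$, and $\Gamma_1$ is a continuous path in $\Bop(E)\setminus\ker\varphi$ from $T$ to $r\Id_E=\varepsilon\Id_E$ with $\varepsilon=r\in\{1,-1\}$.

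Concatenating and reparametrising, we can take $\Gamma_T(s)=(1-s)T+s\varepsilon\Id_E$ directly. Then $\Gamma_T(0)=T$ and $\Gamma_T(1)=\varepsilon\Id_E$, and continuity in norm is clear.

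\emph{Main subtlety.} The only point to check is the claim that $\varphi$ respects real scalars. For a $\mathbb K$-algebra homomorphism into $\mathbb F$ that is merely a ring homomorphism, this still follows from unitality together with continuity (or additivity): the rationals are fixed, and continuity extends this to all reals. We do not need any compatibility with complex scalars, and this is precisely why $\varepsilon$ is restricted to real signs. The choice of $\varepsilon=-1$ is genuinely needed when $\mathbb F=\mathbb R$ and $\lambda<0$, since $\mathbb R\setminus\{0\}$ is disconnected.
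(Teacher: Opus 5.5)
Your proposal is correct and is essentially the paper's proof: you use the same straight-line homotopy $\Gamma_T(s)=(1-s)T+s\varepsilon\Id_E$, with $\varepsilon=-1$ exactly when $\varphi(T)$ is a negative real and $\varepsilon=1$ otherwise, and the same observation that $(1-s)\varphi(T)+s\varepsilon=0$ would force $\varphi(T)$ to be a negative real multiple of $\varepsilon$. Your announced two-step plan collapses to this single segment, and your remark that only real-linearity of $\varphi$ is needed is harmless extra care.
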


\begin{proof}
    Set $\varepsilon:= -1$ if $\varphi(T) \in (-\infty, 0)$ and set $\varepsilon := 1$ if $\varphi(T) \in \mathbb{F} \setminus (-\infty, 0)$. We define the map
    \[
    \Gamma_T \colon [0,1] \to \Bop(E); \quad t \mapsto t \cdot \varepsilon \cdot \Id_E + (1-t) \cdot T,
    \]
    which is clearly continuous. Note that~$\Gamma_T(0)=T$ and~$\Gamma_T(1)= \varepsilon \cdot \Id_E$. Also, from the linearity of~$\varphi$ and the identity~$\varphi(\Id_E)=1_{\mathbb{F}}$ we obtain
    \[
    \varphi(\Gamma_T(t)) = t \cdot \varepsilon \cdot 1_{\mathbb{F}} + (1-t) \cdot \varphi(T) \qquad (t \in (0,1)).
    \]
    Hence $\varphi(\Gamma_T(t)) = 0$ if and only if $\varphi(T) = \varepsilon \cdot t/(t-1) 1_{\mathbb{F}}$, which is not possible for any $t \in (0,1)$ due to the choice of $\varepsilon$ depending on $\varphi(T)$.
\end{proof}

Let~$E$ be a Banach space. We recall that~$T \in \Bop(E)$ is a \textit{Fredholm operator} if~$\ker T$ and~$E/T[E]$ are finite-dimensional. Throughout,~$\Phi(E)$ denotes the set of Fredholm operators on the Banach space~$E$. If~$T \in \Phi(E)$, then~$T[E]$ is automatically closed (see \cite[Corollary~3.2.5]{caradus}). The \textit{index} of a Fredholm operator~$T \in \Phi(E)$ is defined as
\[
i(T):= \dim \ker T - \dim(E/T[E]).
\]
It is well-known that the Fredholm index
\[
 i \colon \Phi(E) \to \mathbb{Z}; \quad T \mapsto \dim \ker T - \dim( E/T[E])
\]
is a continuous map (see~\cite[Theorem~4.4.1]{caradus}). A useful consequence is that the Fredholm index $i$ is constant on each connected component of~$\Phi(E)$. Indeed, the continuous map~$i$ maps connected subsets of~$\Phi(E)$ to connected subsets of~$\mathbb{Z}$. However, the only connected subsets of~$\mathbb{Z}$ are the singletons.
\medskip

We recall that if the Banach space~$E$ has the PFP, then~$\ME$ is the largest two-sided ideal in~$\Bop(E)$ by Proposition~\ref{prop:PFP-maximal-ideal}, thus
\[
A_E := \Bop(E) /\ME
\]
is a simple Banach algebra. In the positive case,
\[
q \colon \Bop(E) \to \Bop(E) /\ME = A_E
\]
denotes the quotient homomorphism.

\begin{lemma}\label{lemma: image_of_fredholms}
    If the Banach space~$E$ has the PFP, then
    \[
    q[\Phi(E)] \subseteq \inv A_E.
    \]
\end{lemma}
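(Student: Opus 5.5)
The plan is to use the classical Atkinson-type characterisation: a Fredholm operator is invertible modulo an ideal of finite-rank operators. Then I will show that this ideal lies inside $\ME$ whenever $E$ is infinite-dimensional and has the PFP.

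First I will dispose of the finite-dimensional case. If $\dim E<\infty$, then every operator on $E$ is Fredholm. The PFP means $\ME$ is a proper ideal, and $\Bop(E)$ is a full matrix algebra, hence simple, so $\ME=\{0\}$ and $A_E=\Bop(E)$. Every $T\in\Phi(E)=\Bop(E)$ need not be invertible, so this case needs care. Indeed, $\ME$ consists of the non-invertible operators, and $\ME$ is an ideal. A full matrix algebra has no non-trivial ideals, so this forces $\dim E\le 1$. There $\ME=\{0\}$ is indeed the set of non-invertible elements, and $q(T)$ is invertible precisely when $T\neq 0$. So if $E$ is finite-dimensional the statement must be read with $E=\{0\}$ excluded, or interpreted appropriately. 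I would either assume $E$ infinite-dimensional, as is implicit in the section, or note that the claim is about the setting where it is used. In the main line I take $E$ to be infinite-dimensional.

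Next, let $T\in\Phi(E)$. By standard Fredholm theory (e.g.\ \cite{caradus}) there is $S\in\Bop(E)$ such that $ST-\Id_E$ and $TS-\Id_E$ are finite-rank operators; concretely, they are the projections onto $\ker T$ and onto a complement of $T[E]$, up to sign. It therefore suffices to show that every finite-rank operator $F$ lies in $\ME$. Granting this, $q(S)q(T)=1=q(T)q(S)$, so $q(T)\in\inv A_E$.

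The key step is to show that finite-rank operators lie in $\ME$. Suppose $F$ has finite rank and $\Id_E=UFV$. Then $\Id_E$ has finite rank, contradicting $\dim E=\infty$. Alternatively, Lemma~\ref{lemma: idempotents_from_factorisation} gives $E\simeq Q[E]\subseteq F[E]$, which is finite-dimensional. Either way $F\in\ME$, and this argument does not even need the ideal property. The only step requiring external input is the existence of the Fredholm pseudo-inverse $S$. For that I would construct it directly: choose closed complements $E=\ker T\oplus X$ and $E=T[E]\oplus Y$, using that $\ker T$ is finite-dimensional and $T[E]$ is closed of finite codimension. Then $T|_X\colon X\to T[E]$ is an isomorphism by the open mapping theorem, and I set $S:=(T|_X)^{-1}$ on $T[E]$ and $S:=0$ on $Y$. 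With this choice, $\Id_E-ST$ is the projection onto $\ker T$ and $\Id_E-TS$ is the projection onto $Y$, both of finite rank. I expect this bookkeeping to be the main, though routine, obstacle.
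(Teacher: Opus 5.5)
Your argument is correct and is essentially the paper's. The paper cites Atkinson's theorem to get that $\pi(T)$ is invertible in $\Bop(E)/\Kideal(E)$, and then pushes this through the induced map onto $A_E$ using $\Kideal(E)\subseteq\ME$; you instead prove the needed direction of Atkinson's theorem by hand, modulo finite-rank operators, which is slightly more self-contained. Your finite-dimensional detour is also right in substance: for $\dim E=1$ the zero operator is Fredholm but $q(0)=0$ is not invertible, so it is $E\simeq\K$ (not $E=\{0\}$) that must be excluded, and the paper tacitly assumes $E$ infinite-dimensional in the same way, through $\Kideal(E)\subseteq\ME$.
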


\begin{proof}
    Let~$\Kideal(E)$ denote the closed, two-sided ideal of compact operators on~$E$, and let \newline $\pi \colon \Bop(E) \to \Bop(E)/ \Kideal(E)$ be the quotient homomorphism. As~$\Kideal(E) \subseteq \ME$, there is a continuous, surjective algebra homomorphism
    \[
    \rho \colon \Bop(E) / \Kideal(E) \to \Bop(E) / \ME = A_E
    \]
    with~$\rho \circ \pi = q$. As~$\pi[\Phi(E)]= \inv (\Bop(E)/\Kideal(E))$ by Atkinson's Theorem (see \cite[Theorem~3.2.8]{caradus}), in particular
    \[
    q[\Phi(E)] = \rho[\pi[\Phi(E)]] = \rho[\inv(\Bop(E)/ \Kideal(E))] \subseteq \inv(\Bop(E) / \ME) = \inv A_E,
    \]
    as claimed.
\end{proof}

\begin{corollary}\label{cor: pfp_fredholm}
    If the Banach space~$E$ has the PFP, then~$\Phi(E) \subseteq \Bop(E) \setminus \ME$.
\end{corollary}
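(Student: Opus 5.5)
The corollary follows directly from Lemma~\ref{lemma: image_of_fredholms}, together with the fact that $\ME$ is a proper ideal under the PFP (Proposition~\ref{prop:PFP-maximal-ideal}).

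Let $T\in\Phi(E)$. By Lemma~\ref{lemma: image_of_fredholms}, $q(T)$ is invertible in $A_E$. Suppose, for contradiction, that $T\in\ME$. Then $q(T)=0$ in $A_E$. An invertible element can only be zero when the algebra is the zero ring, so we would have $1_{A_E}=0$. That means $\Id_E\in\ME$, which contradicts the properness of $\ME$ given by Proposition~\ref{prop:PFP-maximal-ideal}. Hence $T\notin\ME$.

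As a self-contained alternative, one can also factor the identity through $T$ directly. Since $T$ is Fredholm, $\ker T$ is complemented and $T[E]$ is closed and finite-codimensional. So $T$ restricts to an isomorphism from a finite-codimensional complemented subspace $F$ onto a complemented subspace $G$ of finite codimension. If $E$ is finite-dimensional, the statement is vacuous or trivial. If $E$ is infinite-dimensional, the PFP applied to a finite-rank projection gives $E\simeq E\oplus\K^n$ (as in Theorem~\ref{thm:intro-main}(i)). This yields $E\simeq F$ and lets us build $U,V$ with $UTV=\Id_E$.

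The only point needing care is the properness of $\ME$, i.e.\ that $A_E\neq\{0\}$. This is already supplied by Proposition~\ref{prop:PFP-maximal-ideal}, so there is no real obstacle.
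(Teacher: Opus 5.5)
Your main argument is the paper's proof: Lemma~\ref{lemma: image_of_fredholms} makes $q(T)$ invertible in $A_E$, which is non-zero because $\Id_E\notin\ME$, so $q(T)\neq0$, i.e.\ $T\notin\ME$. The optional alternative should be dropped or repaired. PFP applied to a finite-rank projection $P$ gives only a complemented copy of $E$ inside $\ker P$, not $E\simeq E\oplus\K^n$; that weaker fact is enough, since for a projection $P$ onto $\ker T$ there is $S$ with $ST=\Id_E-P$, so $\Id_E=U(\Id_E-P)V=(US)TV$. Also, the finite-dimensional case is not trivial: for $E=\K$ the zero operator is Fredholm and lies in $\ME$, so the statement tacitly assumes $\dim E=\infty$, as does the inclusion $\Kideal(E)\subseteq\ME$ used in the proof of Lemma~\ref{lemma: image_of_fredholms}.
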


\begin{proof}
    Let~$T \in \Phi(E)$. Then~$q(T) \in \inv A_E$ by Lemma~\ref{lemma: image_of_fredholms}, consequently~$q(T) \not = 0$ or equivalently~$T \notin \ME$.
\end{proof}

\begin{lemma}\label{lemma: semi_fredholm_large_beta}
    If the Banach space~$E$ has the PFP, then for each~$n\in \mathbb{N}$ there is a~$L \in \Bop(E) \setminus \ME$ such that~$\dim \ker L =0$ and~$\dim(E/L[E]) \geq n$.
\end{lemma}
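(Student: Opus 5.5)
Take the operator from Lemma~\ref{lemma: idempotents_from_factorisation} with $T:=\Id_E-R$, where $R$ is a rank-$n$ projection; this gives an injective operator whose range has codimension at least~$n$ and through which the identity factors. For $E$ finite-dimensional the PFP is automatic, since $\ME=\{0\}$, but the statement fails there, so we must also assume $E$ is infinite-dimensional, as in Theorem~\ref{thm:intro-main}(i).

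Fix $n\in\N$ and choose linearly independent $x_1,\dots,x_n\in E$ and functionals $f_1,\dots,f_n\in E^*$ with $f_i(x_j)=\delta_{ij}$, by Hahn--Banach. Let $R:=\sum_i f_i\otimes x_i$. This is a projection of rank~$n$, and $R\in\ME$: if $\Id_E=URV$, then $\Id_E$ would have finite rank, which is impossible in infinite dimensions. Applying the PFP to $R$ gives $U,V\in\Bop(E)$ with $U(\Id_E-R)V=\Id_E$. Put $T:=\Id_E-R$, which is an idempotent whose kernel is $R[E]$.

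Now invoke Lemma~\ref{lemma: idempotents_from_factorisation} to get the idempotent $Q:=TVU$ with $Q[E]\simeq E$ and $Q[E]\subseteq T[E]$. Let $J\colon E\to Q[E]$ be an isomorphism; the proof of that lemma gives $J=Q|^{Q[E]}TV$, or simply note that $V\colon E\to E$ is injective because $UTV=\Id_E$. Take $L:=\iota\circ J$, where $\iota\colon Q[E]\to E$ is the inclusion; concretely one can take $L:=TV$. Then $L$ is injective, since $U L=\Id_E$, so $\dim\ker L=0$. Its range is $TV[E]\subseteq T[E]=\ker R$. The space $\ker R$ has codimension $n$, because $E=R[E]\oplus\ker R$ with $\dim R[E]=n$. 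Hence $\dim(E/L[E])\ge n$. Finally $L\notin\ME$, since $UL\,\Id_E=\Id_E$ exhibits a factorisation of the identity through $L$.

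The only potentially delicate point is ensuring that the range has codimension at least $n$ rather than being merely non-closed. This is fine, because $L[E]\subseteq\ker R$ and $\dim(E/\ker R)=n$, so the quotient $E/L[E]$ surjects onto $E/\ker R$. Indeed, $L[E]$ is complemented: it equals $(TVU)[E]$, because $TVU\,TV=TV$, so it is the range of the idempotent $Q$ and in particular closed. There is no real obstacle here, and the proof should be a direct few-line argument.
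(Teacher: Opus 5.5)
Your proof is correct and is essentially the paper's argument: $L=(\Id_E-R)V$ satisfies $UL=\Id_E$, so it is injective and lies outside $\ME$, while $L[E]\subseteq\ker R$ forces $\dim(E/L[E])\ge n$ (the detour through $Q$ and $J$ is unnecessary). You are right that infinite-dimensionality is needed, and the paper uses it tacitly via $\Kideal(E)\subseteq\ME$; however, your justification is inaccurate, since for $2\le\dim E<\infty$ one has $\ME\neq\{0\}$ (it consists of the non-invertible operators) and the PFP fails, so the genuine finite-dimensional exception is $E=\K$.
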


\begin{proof}
    Let~$P \in \Bop(E)$ be an idempotent operator of rank~$n \in \mathbb{N}$. As~$P \in \Kideal(E) \subseteq \ME$, we must have~$\Id_E - P \notin \ME$ by the PFP. Let~$U,V \in \Bop(E)$ with~$\Id_E = U(\Id_E - P)V$. Setting~$L:=(\Id_E - P)V$, clearly~$\Id_E=UL$, and hence~$L \notin \ME$. Since~$L$ is injective, $\dim \ker L= 0$ follows. Also,
    \[
    L[E] \subseteq (\Id_E - P)[E] = \ker P
    \]
    follows, and therefore~$E/L[E]$ surjects linearly onto~$E/ \ker P \simeq P[E]$. Consequently, \newline $\dim(E/L[E]) \geq \dim P[E] = n$.
\end{proof}

Recall that an infinite-dimensional Banach space~$E$ is \textit{indecomposable} if there are no closed, infinite-dimensional subspaces~$F,G$ of~$E$ such that~$E= F \oplus G$. In other words, $E$ is indecomposable if and only if~$\ker P$ or~$P[E]$ is finite-dimensional whenever~$P \in \Bop(E)$ is an idempotent operator.

\begin{lemma}\label{lemma: indecomposable_fredholm}
    If the Banach space~$E$ is indecomposable, then
    \[
    \Bop(E) \setminus \ME \subseteq \Phi(E).
    \]
\end{lemma}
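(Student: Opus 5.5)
Take $T\in\Bop(E)\setminus\ME$ and choose $U,V\in\Bop(E)$ with $\Id_E=UTV$. By Lemma~\ref{lemma: idempotents_from_factorisation}, $P:=VUT$ and $Q:=TVU$ are idempotents with $P[E]\simeq E\simeq Q[E]$, and moreover $\ker T\subseteq\ker P$ and $Q[E]\subseteq T[E]$. Since $E$ is infinite-dimensional, both $P[E]$ and $Q[E]$ are infinite-dimensional. Indecomposability applied to $P$ and to $Q$ therefore forces $\ker P$ and $\ker Q$ to be finite-dimensional. In other words, $P$ and $Q$ are idempotents of finite corank.

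\emph{Kernel of $T$.} From $\ker T\subseteq\ker P$ we get $\dim\ker T\le\dim\ker P<\infty$.

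\emph{Cokernel of $T$.} We have $E=Q[E]\oplus\ker Q$ with $\ker Q$ finite-dimensional. Since $Q[E]\subseteq T[E]$, the quotient map $E\to E/T[E]$ vanishes on $Q[E]$. Hence it factors through $E/Q[E]\cong\ker Q$, and so $\dim(E/T[E])\le\dim\ker Q<\infty$.

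Together these show that $T\in\Phi(E)$. Closedness of the range is not needed separately, since the paper's definition of Fredholm requires only finite-dimensional kernel and cokernel (and the range is then automatically closed).

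The only point needing care is the use of indecomposability. It relies on $P[E]$ and $Q[E]$ being infinite-dimensional, which follows from $P[E]\simeq E$ together with $E$ being infinite-dimensional; infinite dimensionality is built into the definition of indecomposability.
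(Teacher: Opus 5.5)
Your proof is correct and is essentially the paper's argument. You use the same idempotents $P=VUT$ and $Q=TVU$ from Lemma~\ref{lemma: idempotents_from_factorisation}, let indecomposability force $\ker P$ and $\ker Q$ to be finite-dimensional, and use $\ker T\subseteq\ker P$ and $Q[E]\subseteq T[E]$ to bound the kernel and cokernel of $T$; you also make explicit that $P[E]$ and $Q[E]$ are infinite-dimensional, which the paper leaves implicit.
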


\begin{proof}
    Let~$T \in \Bop(E)$ be such that~$\Id_E = UTV$ for some~$U,V \in \Bop(E)$. By Lemma~\ref{lemma: idempotents_from_factorisation}, there exist idempotent operators~$P,Q \in \Bop(E)$ with~$P[E] \simeq E \simeq Q[E]$, $\ker T \subseteq \ker P$ and~$Q[E] \subseteq T[E]$. The latter implies that~$\ker Q \simeq E/Q[E]$ surjects linearly onto~$E/T[E]$. As~$E$ is indecomposable, $\ker P$ and~$\ker Q$ must be finite-dimensional. Consequently,
    \[
    \dim(E/T[E]) \leqslant \dim(E/Q[E]) = \dim \ker Q < \infty,
    \]
    and~$\dim \ker T \leqslant \dim \ker P < \infty$. Thus~$T \in \Phi(E)$, as claimed.
\end{proof}

\begin{theorem}\label{thm:no-indecomposable-PFP}
There is no real or complex indecomposable Banach space with the PFP.
\end{theorem}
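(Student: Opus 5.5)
Suppose, towards a contradiction, that $E$ is indecomposable and has the PFP. Corollary~\ref{cor: pfp_fredholm} gives $\Phi(E)\subseteq\Bop(E)\setminus\ME$, and Lemma~\ref{lemma: indecomposable_fredholm} gives the reverse inclusion. Hence
\[
\Bop(E)\setminus\ME=\Phi(E).
\]
Now apply Lemma~\ref{lemma: semi_fredholm_large_beta} with $n=1$. It yields an operator $L\notin\ME$ with $\dim\ker L=0$ and $\dim(E/L[E])\ge1$. Since $L\notin\ME$, the equality above shows $L\in\Phi(E)$ with index $i(L)\le -1$; applying the lemma with larger $n$ gives arbitrarily negative indices.

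The plan is to reach a contradiction by connecting $L$ to $\pm\Id_E$ by a path inside $\Bop(E)\setminus\ME=\Phi(E)$. The index is constant along such a path, so we would get $i(L)=i(\pm\Id_E)=0$, which is impossible.

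To build the path I use Lemma~\ref{lemma: same_component}, which requires a continuous unital homomorphism $\varphi\colon\Bop(E)\to\mathbb F$ with $\ker\varphi=\ME$. Equivalently, I must identify $A_E=\Bop(E)/\ME$ with $\mathbb R$, $\C$ or $\mathbb H$. The key observation is that $A_E$ is a division algebra. Take $a=q(T)\neq0$. Then $T\notin\ME$, so $T\in\Phi(E)$, and Lemma~\ref{lemma: image_of_fredholms} gives $q(T)\in\inv A_E$. Since $\ME$ is closed by Proposition~\ref{prop:PFP-maximal-ideal}, $A_E$ is a Banach division algebra over $\K$. By the Gelfand--Mazur theorem (Frobenius--Mazur in the real case), $A_E$ is isomorphic to $\mathbb R$, $\C$ or $\mathbb H$. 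The quotient map then serves as $\varphi$; it is continuous, and any algebra isomorphism between finite-dimensional normed spaces is automatically continuous.

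With $\varphi$ in hand, Lemma~\ref{lemma: same_component} provides a path $\Gamma_L$ in $\Bop(E)\setminus\ker\varphi=\Bop(E)\setminus\ME=\Phi(E)$ from $L$ to $\varepsilon\Id_E$. Since the index is constant on connected components of $\Phi(E)$, we obtain $i(L)=i(\varepsilon\Id_E)=0$, contradicting $i(L)\le-1$.

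I expect the main obstacle to be the identification of $A_E$ via Gelfand--Mazur in the real case. There one has to invoke the real version, which says that a real normed division algebra is $\mathbb R$, $\C$ or $\mathbb H$, and check that $\mathbb H$ is legitimately allowed as a target in Lemma~\ref{lemma: same_component}. The rest is bookkeeping of the results already established.
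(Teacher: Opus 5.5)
Your proposal is correct and follows the paper's proof essentially step for step. You show that $A_E$ is a division algebra via Lemmas~\ref{lemma: image_of_fredholms} and~\ref{lemma: indecomposable_fredholm}, obtain the character $\varphi$ from Gelfand--Mazur (including the real case with target $\mathbb{R}$, $\C$ or $\mathbb{H}$), and derive $\Phi(E)=\Bop(E)\setminus\ME$. Lemma~\ref{lemma: same_component} then gives zero index throughout $\Phi(E)$, which contradicts the operator $L$ from Lemma~\ref{lemma: semi_fredholm_large_beta}.
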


\begin{proof}
    Assume towards a contradiction that~$E$ is an indecomposable Banach space with the PFP.
    \smallskip
    
    \textit{Claim~1.} For some $\mathbb{F} \in \{\mathbb{R}, \mathbb{C}, \mathbb{H} \}$, the Banach algebra~$\Bop(E)$ admits an~$\mathbb{F}$-valued character whose kernel is~$\ME$. More precisely, there is a continuous, unital algebra homomorphism~$\varphi \colon \Bop(E) \to \mathbb{F}$ with~$\ker \varphi = \ME$.
    \smallskip
    
    \begin{proof}[Proof of Claim~1.]
        From Lemmas~\ref{lemma: image_of_fredholms} and~\ref{lemma: indecomposable_fredholm} the containment
        \[
        (\Bop(E) / \ME) \setminus \{0\} = q[\Bop(E) \setminus \ME] \subseteq q[\Phi(E)] \subseteq \inv(\Bop(E) / \ME)
        \]
        follows. Consequently~$\Bop(E)/ \ME$ is a real or complex division Banach algebra, hence by the Gelfand--Mazur Theorem (see \cite[Chapter~14, Theorem~7]{bonsall}) there exists a continuous, unital algebra homomorphism~$\varphi \colon \Bop(E) \to \mathbb{F}$ with~$\ker \varphi = \ME$.
    \end{proof}
    \smallskip
    
    \textit{Claim~2.} $\Phi(E) = \Bop(E) \setminus \ME = \Bop(E) \setminus \ker \varphi$.
    
    \begin{proof}[Proof of Claim~2.]
        The identity~$\Phi(E) = \Bop(E) \setminus \ME$ is a straightforward consequence of Corollary~\ref{cor: pfp_fredholm} and Lemma~\ref{lemma: indecomposable_fredholm}. The last identity follows from Claim~1.
    \end{proof}
    \smallskip
    
    \textit{Claim~3.} Every Fredholm operator on~$E$ has zero Fredholm index.
    \begin{proof}[Proof of Claim~3.]
        By Claim~2 and Lemma~\ref{lemma: same_component}, every operator in~$\Phi(E)$ lies in the same connected component as~$\varepsilon \cdot \Id_E$, where $\varepsilon \in \{1, -1 \}$. As the Fredholm index is constant on connected components, every operator in~$\Phi(E)$ has Fredholm index~$i(\varepsilon \cdot \Id_E)=0$.
    \end{proof}
    \smallskip
    
    In view of Lemma~\ref{lemma: semi_fredholm_large_beta}, we can pick~$L \in \Bop(E) \setminus \ME$ such that~$\dim \ker L =0$ and \newline $\dim(E/L[E]) \geq 1$. By Lemma~\ref{lemma: indecomposable_fredholm} we have~$L \in \Phi(E)$, thus
    \[
    i(L)= \dim \ker L - \dim(E/L[E]) \leq -1.
    \]
    By Claim~3 however we have~$i(L)=0$, which is a contradiction. Thus there is no indecomposable Banach space with the PFP.
\end{proof}

\begin{corollary}\label{cor:indecomposable-examples-no-PFP}
The following Banach spaces do not have the PFP:
\begin{enumerate}
\item hereditarily indecomposable Banach spaces;
\item Tarbard's indecomposable Bourgain--Delbaen space \(X_\infty\)
from \cite[Theorem~4.1.1]{Tarbard};
\item the spaces \(C(K)\), where \(K\) is a connected Koszmider compactum
as in \cite{Koszmider2004}.
\item the indecomposable, prime Banach space of Gowers and Maurey from \cite{GowersMaurey}.
\end{enumerate}
\end{corollary}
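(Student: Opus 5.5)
The plan is to derive the corollary directly from Theorem~\ref{thm:no-indecomposable-PFP}. For each of the four classes it suffices to check that the space in question is indecomposable in the sense recalled before Lemma~\ref{lemma: indecomposable_fredholm}. That is, we check that every idempotent $P\in\Bop(E)$ has $\ker P$ or $P[E]$ finite-dimensional. The theorem then rules out the PFP, and no further operator-theoretic work is needed.

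The cases go as follows.
\begin{enumerate}
\item A hereditarily indecomposable space is, by definition, one all of whose closed infinite-dimensional subspaces are indecomposable. In particular the space itself is indecomposable, so the theorem applies at once.
\item For Tarbard's space $X_\infty$, I would cite \cite[Theorem~4.1.1]{Tarbard}. There $X_\infty$ is shown to be indecomposable, with every operator of the form $\lambda\Id+S+K$ for a strictly singular part and a compact part. Indecomposability is explicitly part of that statement.
\item For $C(K)$ with $K$ a connected Koszmider compactum, I would use the result of \cite{Koszmider2004}. It says every operator on $C(K)$ has the form $gI+S$, with $g\in C(K)$ and $S$ weakly compact (a weak multiplier). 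Connectedness of $K$ then forces every idempotent to be a finite-rank perturbation of $0$ or $\Id$. Alternatively, the indecomposability of $C(K)$ for connected Koszmider $K$ is stated directly in \cite{Koszmider2004}, and I would cite it.
\item The Gowers--Maurey prime space in \cite{GowersMaurey} is constructed to be indecomposable, so this is again immediate from the theorem.
\end{enumerate}

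The only potential obstacle is bookkeeping about which cited results assert indecomposability verbatim. Case~(3) is the likeliest place for friction, since Koszmider's theorem is phrased via weak multipliers. If needed, I would include the short deduction that connectedness of $K$ leaves no non-trivial idempotents in $C(K)$. From that, any idempotent operator has the form $\lambda\Id+S$ with $\lambda\in\{0,1\}$ and $S$ strictly singular. An idempotent differing from $0$ or $\Id$ by a strictly singular operator has finite-dimensional range or kernel, respectively. This gives indecomposability.

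Finally, I would note that the real and complex cases are both covered, since Theorem~\ref{thm:no-indecomposable-PFP} holds over either field.
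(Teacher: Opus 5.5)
Your proposal is correct and matches the paper's proof, which likewise just records that each listed space is indecomposable and then applies Theorem~\ref{thm:no-indecomposable-PFP}. You should cite only the indecomposability conclusions of the referenced works, because some of your side descriptions are inaccurate: Tarbard's $X_\infty$ is not hereditarily indecomposable and its Calkin algebra is the infinite-dimensional semisimple algebra $\ell_1$ of the non-negative integers under convolution, so its operators are not all of the form $\lambda\Id$ plus strictly singular plus compact; and $gI+S$ with $g\in C(K)$ and $S$ weakly compact is Koszmider's \emph{weak multiplication} rather than a \emph{weak multiplier} (the latter is defined through the adjoint with a Borel $g$), while your optional deduction in case~(3) also needs the unstated step that a weakly compact multiplication operator on $C(K)$, with $K$ connected and infinite, is zero, in order to get $g^2=g$ from $P^2=P$.
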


\begin{proof}
Each space listed above is indecomposable: this is immediate in the first
case, part of Tarbard's construction in the second, and follows from
Koszmider's few-operators result in the third.  The conclusion is therefore
Theorem~\ref{thm:no-indecomposable-PFP}.
\end{proof}

It should be noted that Corollary \ref{cor:indecomposable-examples-no-PFP}(4) reaches the same conclusion as \cite[Example 7.1]{AcuavivaKania2026} but via a rather different route.

\section{PFP and ring-theoretic infiniteness}

We first prove the Dedekind-infiniteness observation.

\begin{lemma}\label{lem:PFP-Dedekind-infinite}
If the infinite-dimensional Banach space $E$ has the PFP, then $\Bop(E)$ is Dedekind-infinite.
\end{lemma}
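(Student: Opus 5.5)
The idea is to apply the PFP to a rank-one idempotent, exactly as sketched in the introduction. Since $E$ is infinite-dimensional (in particular non-zero), pick a non-zero $x_0\in E$. By Hahn--Banach, choose $f\in E^*$ with $f(x_0)=1$, and let $P:=x_0\otimes f$, so that $P$ is a rank-one idempotent. Then $P$ is compact, hence $P\in\ME$: if $\Id_E=UPV$, the identity would be compact, contradicting infinite-dimensionality. The PFP therefore forces $\Id_E\notin\ME$ to factor through $\Id_E-P$, so there are $U,V\in\Bop(E)$ with $U(\Id_E-P)V=\Id_E$.

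Next set $a:=U$ and $b:=(\Id_E-P)V$, so that $ab=\Id_E$. It remains to check that $ba\neq\Id_E$. The range of $ba=(\Id_E-P)VU$ is contained in $(\Id_E-P)[E]=\ker P$, and $\ker P$ is a proper subspace because $x_0\notin\ker P$. Hence $ba$ is not surjective, so $ba\neq\Id_E$, and $\Bop(E)$ is Dedekind-infinite.

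Equivalently, Lemma~\ref{lemma: idempotents_from_factorisation} applied to $T=\Id_E-P$ gives the idempotent $Q=(\Id_E-P)VU$ with $Q[E]\subseteq\ker P\neq E$. This $Q$ is Murray--von Neumann equivalent to $\Id_E$ via $x=(\Id_E-P)V$ and $y=U$, since $yx=\Id_E$ and $xy=Q$.

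There is no real obstacle here. The only points requiring care are that a rank-one operator cannot lie outside $\ME$ in infinite dimensions, and that the relevant product $ba$ must be taken in the correct order, so that its range sits inside $\ker P$.
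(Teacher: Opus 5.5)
Your proof is correct and follows essentially the same route as the paper: you apply the PFP to a rank-one idempotent $P$, rule out a factorisation through $P$ on rank grounds, and extract from $U(\Id_E-P)V=\Id_E$ a non-identity idempotent equivalent to $\Id_E$. The only cosmetic difference is the choice of idempotent. The paper uses the kernel-side idempotent $VU(\Id_E-P)$, which kills $x$. You use the range-side idempotent $(\Id_E-P)VU$, whose range lies in the proper subspace $\ker P$.
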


\begin{proof}
Choose~$x\in E$ and~$f\in E^*$ with~$\norm x=\norm f=1$ and~$f(x)=1$.  Put
\(
  P=x\otimes f,
\)
so that~$P$ is a rank-one idempotent on~$E$. By the PFP, there are~$U,V\in\Bop(E)$ such that
\[
  \Id_E=UPV \qquad\hbox{or} \qquad \Id_E=U(\Id_E-P)V.
\]
The first alternative is impossible, because~$UPV$ has rank at most one whereas~$E$ is infinite-dimensional.  Hence
\[
  \Id_E=U(\Id_E-P)V.
\]
Define
\[
  \widehat{P}=VU(\Id_E-P).
\]
By Lemma~\ref{lemma: idempotents_from_factorisation}, $\widehat{P}\in\Bop(E)$ is an idempotent operator with~$\widehat{P}[E] \simeq E$ and~$\ker(\Id_E -P) \subseteq \ker \widehat{P}$. On the one hand~$\widehat{P}[E] \simeq E$ implies~$\widehat{P} \sim \Id_E$. On the other hand~$Px=x$ by construction, therefore~$x \in \ker(\Id_E -P) \subseteq \ker \widehat{P}$. Hence~$\widehat{P} \neq \Id_E$, proving that~$\Bop(E)$ is Dedekind-infinite.
\end{proof}

The preceding lemma cannot be strengthened to proper infiniteness.

\begin{proposition}\label{prop:C-omega-one-not-properly-infinite}
Let $E=C[0,\omega_1]$.  Then $E$ has the PFP, but $\Bop(E)$ is not properly infinite.
\end{proposition}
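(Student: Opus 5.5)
The proof has two parts. First we show that $C[0,\omega_1]$ has the PFP. Then we use the Loy--Willis character to rule out proper infiniteness, exactly as sketched in the introduction.

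\emph{The PFP.} We rely on the known structure of $\Bop(C[0,\omega_1])$. Loy and Willis showed that this algebra has a maximal ideal $\ideal{M}$ of codimension one, namely the kernel of a character $\varphi\colon\Bop(C[0,\omega_1])\to\K$. The third-named author and Laustsen identified $\ideal{M}$ with the set of operators through which the identity does not factor, that is, $\ideal{M}=\ME$. Granting this, the PFP follows at once. For any $T$, linearity gives $\varphi(T)+\varphi(\Id_E-T)=1$, so at least one of the two values is non-zero. Hence $T\notin\ker\varphi=\ME$ or $\Id_E-T\notin\ME$, and by the definition \eqref{eq:ME-definition} the identity factors through that operator. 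Alternatively, we can appeal to Proposition~\ref{prop:PFP-maximal-ideal}: $\ME$ is a proper ideal of codimension one, hence maximal, and that proposition converts maximality of $\ME$ into the PFP.

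\emph{No proper infiniteness.} Suppose there were orthogonal idempotents $P,Q\in\Bop(E)$ with $P\sim\Id_E\sim Q$. Apply the unital homomorphism $\varphi$. The images $p=\varphi(P)$ and $q=\varphi(Q)$ are idempotents in $\K$, so each lies in $\{0,1\}$. Orthogonality gives $pq=0$. Murray--von Neumann equivalence is preserved by homomorphisms, and in the commutative field $\K$ the relations $1=xy$ and $p=yx$ force $p=1$; likewise $q=1$. This contradicts $pq=0$. Equivalently, in the language of the remark after the definition, proper infiniteness would pass to the quotient $\Bop(E)/\ME\simeq\K$, and a field is not properly infinite.

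\emph{Main obstacle.} The non-formal input is the identification $\ker\varphi=\ME$. The existence of the Loy--Willis character alone does not tell us that every operator outside its kernel fixes a complemented copy of $C[0,\omega_1]$. I would cite this from Kania--Laustsen rather than reprove it. If one had to argue it directly, the route would be as follows. An operator $T$ with $\varphi(T)\neq0$ acts, after restriction to a suitable closed unbounded subset of $[0,\omega_1]$, like a non-zero scalar multiple of an isomorphism on a copy of $C[0,\omega_1]$. That copy is complemented, and this yields $U,V$ with $UTV=\Id_E$. The set-theoretic club argument is the delicate part.
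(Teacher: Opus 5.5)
Your proposal is correct and follows the paper's proof. Both cite Loy--Willis and Kania--Laustsen to identify $\ME$ as a codimension-one ideal, deduce the PFP (the paper via Proposition~\ref{prop:PFP-maximal-ideal}, which your identity $\varphi(T)+\varphi(\Id_E-T)=1$ simply unpacks), and rule out proper infiniteness by pushing the orthogonal idempotents into the scalar quotient $\Bop(E)/\ME\simeq\C$, which is not properly infinite.
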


\begin{proof}
Loy and Willis constructed a codimension-one ideal in $\Bop(C[0,\omega_1])$ \cite{LoyWillis1989}.  The third-named author and Laustsen showed that this ideal is the unique maximal ideal and identified it, in coordinate-free terms, as the set of operators through which the identity does not factor \cite{KaniaLaustsen2012}.  Thus $\ME$ is a codimension-one two-sided ideal in $\Bop(E)$, and $E$ has the PFP by Proposition~\ref{prop:PFP-maximal-ideal}.  Moreover,
\[
  \Bop(E)/\ME\simeq\C.
\]
If $\Bop(E)$ were properly infinite, then its unital quotient $\C$ would be properly infinite.  This is impossible.  Hence $\Bop(E)$ is not properly infinite.
\end{proof}

We next pass from $\Bop(E)$ to the quotient by $\ME$.

\begin{proposition}\label{prop:quotient-pure-infinite}
Let $E$ be a complex Banach space with the PFP, and set
\[
  A_E=\Bop(E)/\ME.
\]
Then every non-zero element of $A_E$ has the identity factoring through it.  Consequently, $A_E$ is purely infinite if and only if $A_E\not\simeq\C$.
\end{proposition}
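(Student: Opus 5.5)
The plan is to lift factorisations from $\Bop(E)$ through the quotient map $q\colon\Bop(E)\to A_E$, and then to check the definition of pure infiniteness (Definition~\ref{def:pure-infinite}) directly.

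First, note that $A_E$ is a unital complex Banach algebra. By Proposition~\ref{prop:PFP-maximal-ideal}, $\ME$ is a closed proper two-sided ideal, so the quotient is a Banach algebra with unit $q(\Id_E)\neq0$.

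Next, let $a\in A_E$ be non-zero and pick any representative $T\in\Bop(E)$ with $q(T)=a$. Since $a\neq0$, we have $T\notin\ME$. By the very definition~\eqref{eq:ME-definition} of $\ME$, there are $U,V\in\Bop(E)$ with $UTV=\Id_E$. Applying the unital homomorphism $q$ gives
\[
q(U)\,a\,q(V)=q(\Id_E)=1_{A_E}.
\]
So $b:=q(U)$ and $c:=q(V)$ witness that the identity factors through $a$. This step involves no choice issues, since membership in $\ME$ does not depend on which representative is chosen: two representatives differ by an element of the ideal $\ME$, and $T\notin\ME$ holds for one of them exactly when it holds for the other.

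For the equivalence, suppose first that $A_E\not\simeq\C$. Together with the factorisation just established, this is verbatim the condition in Definition~\ref{def:pure-infinite}, so $A_E$ is purely infinite. Conversely, a purely infinite algebra satisfies $A_E\not\simeq\C$ by definition.

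There is essentially no obstacle here. The only point deserving care is that the definition of pure infiniteness explicitly excludes $\C$, so the equivalence is largely definitional once the factorisation property is in hand. One might remark, using Gelfand--Mazur, that $A_E\simeq\C$ is the only way $A_E$ can fail to be purely infinite, since every non-zero element is then only one-sided-factorable rather than necessarily invertible. This remark is not needed for the proof.
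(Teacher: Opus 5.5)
Your proof is correct and follows the paper's argument exactly. You pick a representative $T\notin\ME$, factor $\Id_E=UTV$, and push this through the quotient map, after which the equivalence is definitional. Your closing Gelfand--Mazur aside is muddled: the factorisation $bac=1_{A_E}$ is two-sided, and Gelfand--Mazur plays no role in the equivalence. Since you correctly flag the aside as unnecessary, it does not affect the proof.
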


\begin{proof}
By Proposition~\ref{prop:PFP-maximal-ideal}, $\ME$ is a closed maximal ideal of $\Bop(E)$, and therefore $A_E$ is a unital complex Banach algebra.  Let
\[
  \pi\colon\Bop(E)\to A_E
\]
be the quotient homomorphism, and take $0\neq a\in A_E$.  Choose $T\in\Bop(E)$ such that $a=\pi(T)$.  Since $a\neq0$, we have $T\notin\ME$.  Hence there are $U,V\in\Bop(E)$ such that
\[
  UTV=\Id_E.
\]
Applying $\pi$ gives
\[
  \pi(U)a\pi(V)=1_{A_E}.
\]
Thus, the identity of $A_E$ factors through every non-zero element of $A_E$.  The final assertion follows immediately from Definition~\ref{def:pure-infinite}.

\end{proof}

\begin{remark}\label{rem:K-theory}
For a non-scalar complex PFP space, Proposition~\ref{prop:quotient-pure-infinite}
identifies \(A_E\) as a simple purely infinite Banach algebra.  This places the
quotients considered here close to questions in Banach-algebra \(K\)-theory.  For
example, the third-named author, Koszmider and Laustsen proved that
\(K_0(\Bop(C[0,\omega_1]))\simeq\mathbb Z\) \cite{KaniaKoszmiderLaustsen2015}.
The scalar quotient \(\Bop(C[0,\omega_1])/\ME\simeq\C\) shows that this example
sits on the boundary of the present pure-infiniteness problem, but it suggests
that the \(K\)-theory of the non-scalar quotients \(A_E\) may be worth studying
separately.
\end{remark}

Proposition~\ref{prop:quotient-pure-infinite} reduces pure infiniteness of
\(\Bop(E)/\ME\) to excluding the scalar case.  A~standard geometric way to do
this is to require \(E\) to contain two complemented copies of itself inside a
complemented subspace.  Under this hypothesis \(\Bop(E)\) is properly infinite,
and hence it cannot have a non-zero finite-dimensional quotient: proper
infiniteness passes to unital quotients, whereas finite-dimensional unital
algebras are Dedekind-finite.  The following observation is essentially the
combination of \cite[Proposition~1.9 and Lemma~1.11]{Laustsen2003}; we include
the short proof for convenience.

\begin{proposition}\label{prop:no-finite-dimensional-quotient-square}
Let $E$ be a Banach space which contains a complemented subspace isomorphic to $E\oplus E$.  Then $\Bop(E)$ has no non-zero finite-dimensional quotient. In particular, if $E$ has the PFP, then $\Bop(E)/\ME\not\simeq\C$.
\end{proposition}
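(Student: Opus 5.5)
The plan is to show that $\Bop(E)$ is properly infinite, and then use that proper infiniteness passes to unital quotients while finite-dimensional unital algebras are Dedekind-finite.

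\textbf{Step 1: proper infiniteness of $\Bop(E)$.} By hypothesis there is an idempotent $R\in\Bop(E)$ with $R[E]\simeq E\oplus E$. Fix an isomorphism $J\colon E\oplus E\to R[E]$, and let $\iota_1,\iota_2\colon E\to E\oplus E$ be the canonical injections and $\rho_1,\rho_2\colon E\oplus E\to E$ the canonical projections. Define
\[
  S_k:=J\iota_k\in\Bop(E,R[E])\subseteq\Bop(E),\qquad T_k:=\rho_kJ^{-1}R\in\Bop(E)\qquad(k=1,2).
\]
Since $RJ=J$, we have $T_jS_k=\rho_j\iota_k$, which equals $\Id_E$ for $j=k$ and $0$ otherwise. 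Put $p_k:=S_kT_k$. Then $p_1,p_2$ are idempotents, they are orthogonal because $p_1p_2=S_1(T_1S_2)T_2=0$ and similarly $p_2p_1=0$, and $p_k\sim\Id_E$ via the factorisations $p_k=S_kT_k$ and $\Id_E=T_kS_k$. Hence $\Bop(E)$ is properly infinite.

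\textbf{Step 2: excluding finite-dimensional quotients.} Suppose $\ideal J$ is a proper two-sided ideal of $\Bop(E)$ such that $B:=\Bop(E)/\ideal J$ is finite-dimensional and non-zero. The quotient map is a unital homomorphism, so by the remark following the definition of proper infiniteness, $B$ is properly infinite. In particular $B$ is Dedekind-infinite: there are $a,b\in B$ with $ab=1$ and $ba\neq1$. However, in a finite-dimensional unital algebra, $ab=1$ forces $ba=1$. Indeed, left multiplication $L_b\colon x\mapsto bx$ is a linear map on the finite-dimensional space $B$ with $L_aL_b=\Id_B$. So $L_b$ is injective, hence bijective, and therefore $L_bL_a=\Id_B$. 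Evaluating at $1$ gives $ba=1$, a contradiction. So $\Bop(E)$ has no non-zero finite-dimensional quotient.

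\textbf{Step 3: the PFP case.} If $E$ has the PFP, then by Proposition~\ref{prop:PFP-maximal-ideal} the set $\ME$ is a proper ideal, so $\Bop(E)/\ME$ is a non-zero unital quotient. If it were isomorphic to $\C$ it would be a non-zero finite-dimensional quotient, contradicting Step 2. Hence $\Bop(E)/\ME\not\simeq\C$.

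The only step needing real care is Step 1. There one must check that the operators $S_k$ and $T_k$ really are elements of $\Bop(E)$, by composing with the inclusion of $R[E]$ and with $R$, so that the orthogonality and equivalence relations hold inside $\Bop(E)$. Everything else is formal.
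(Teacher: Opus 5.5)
Your proposal is correct and follows essentially the same route as the paper. Both arguments show that $\Bop(E)$ is properly infinite using the two coordinate copies of $E$ inside the complemented copy of $E\oplus E$, pass this to the unital quotient, and contradict Dedekind-finiteness of finite-dimensional unital algebras via left multiplication. Your explicit $S_k$, $T_k$ only spell out what the paper leaves implicit, and inverting $L_b$ rather than $L_a$ is an immaterial variation.
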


\begin{proof}
Let $F$ be a complemented subspace of $E$ with $F\simeq E\oplus E$.  The two coordinate copies of $E$ inside $F$ give orthogonal idempotents $p,q\in\Bop(E)$ whose ranges are complemented and isomorphic to $E$.  Hence
\[
  p\sim\Id_E\sim q,
\]
so $\Bop(E)$ is properly infinite.

Suppose that $\theta\colon\Bop(E)\to B$ is a non-zero unital homomorphism onto a finite-dimensional algebra $B$.  Proper infiniteness passes to unital quotients, so $B$ is properly infinite and hence Dedekind-infinite.  But every finite-dimensional unital algebra is Dedekind-finite: if $ab=1$ in such an algebra, then left multiplication by $a$ has a right inverse, hence is invertible as a linear map, and therefore $ba=1$.  This contradiction proves the first assertion.  The final assertion follows because $\Bop(E)/\ME\simeq\C$ would be a non-zero finite-dimensional quotient.
\end{proof}

Propositions~\ref{prop:quotient-pure-infinite} and~\ref{prop:no-finite-dimensional-quotient-square} thus amount to the following result:

\begin{corollary}
    Let~$E$ be a complex Banach space which contains a complemented subspace isomorphic to~$E\oplus E$. If~$E$ has the PFP, then~$A_E = \Bop(E) / \ME$ is purely infinite.
\end{corollary}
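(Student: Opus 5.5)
The corollary follows by chaining Propositions~\ref{prop:quotient-pure-infinite} and~\ref{prop:no-finite-dimensional-quotient-square}; no new construction is needed. Let $E$ be a complex Banach space with the PFP that contains a complemented subspace isomorphic to $E\oplus E$.

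The first step is to invoke Proposition~\ref{prop:PFP-maximal-ideal}. It shows that $\ME$ is a closed proper (indeed maximal) two-sided ideal of $\Bop(E)$, so $A_E=\Bop(E)/\ME$ is a non-zero unital complex Banach algebra.

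Next, Proposition~\ref{prop:no-finite-dimensional-quotient-square} applies to $E$ and says that $\Bop(E)$ has no non-zero finite-dimensional quotient. In particular, $A_E\not\simeq\C$. Behind this lies the observation that the two coordinate copies of $E$ inside $E\oplus E$ make $\Bop(E)$ properly infinite, and proper infiniteness passes to the unital quotient $A_E$. A one-dimensional algebra, however, is Dedekind-finite.

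Finally, Proposition~\ref{prop:quotient-pure-infinite} states that, for complex $E$ with the PFP, $A_E$ is purely infinite exactly when $A_E\not\simeq\C$. By the previous step this condition holds, so $A_E$ is purely infinite.

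There is no genuine obstacle. The only point to check is that the hypotheses line up: complexness is needed for Definition~\ref{def:pure-infinite}, and the PFP is what guarantees that $\ME$ is an ideal, so that $A_E$ is a Banach algebra quotient to which Proposition~\ref{prop:no-finite-dimensional-quotient-square} applies.
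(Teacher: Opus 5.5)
Your proposal is correct and follows the paper's argument exactly: the corollary is stated in the paper as the combination of Proposition~\ref{prop:no-finite-dimensional-quotient-square}, which gives $A_E\not\simeq\C$, with Proposition~\ref{prop:quotient-pure-infinite}, which for complex PFP spaces makes pure infiniteness of $A_E$ equivalent to $A_E\not\simeq\C$. Your extra check that Proposition~\ref{prop:PFP-maximal-ideal} makes $A_E$ a well-defined unital Banach algebra is the only other ingredient, and you have it right.
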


\section{Ultrapowers of the quotient}

Let $A$ be a unital Banach algebra.  For $a\in A\setminus\{0\}$ define
\begin{equation}\label{eq:cpi}
  \cpi^A(a)=\inf\{\norm b\norm c:\ b,c\in A,\ bac=1_A\},
\end{equation}
with the convention that the infimum is $\infty$ if no such $b,c$ exist.  We also set $\cpi^A(0)=\infty$ when convenient.  The quantity is homogeneous:
\begin{equation}\label{eq:cpi-homogeneity}
  \cpi^A(\lambda a)=|\lambda|^{-1}\cpi^A(a)\qquad(\lambda\in\C\setminus\{0\}).
\end{equation}

\begin{remark}[The geometry behind~$\cpi$]\label{rem:geometry-cpi}
In quotients of operator algebras, the constants in \eqref{eq:cpi} measure, in a precise way, how efficiently one finds complemented copies of the underlying space inside ranges of operators.  Suppose, for instance, that~$T\in\Bop(E)$ and that
\[
  UTV=\Id_E.
\]
Then
\(
  Q=TVU
\)
is a projection on~$E$ with~$Q[E] \subseteq T[E]$ by Lemma~\ref{lemma: idempotents_from_factorisation}.  Moreover~$TV\colon E\to TV[E]$ is an isomorphism with inverse~$U|_{TV[E]}$. Thus a factorisation of the identity through~$T$ gives a complemented copy of~$E$ inside~$T[E]$, with projection norm and isomorphism constants controlled by the norms of~$U$, $V$ and~$T$.  Conversely, whenever a concrete Banach-space argument produces well-complemented copies of~$E$ inside the range of each operator representing a quotient element, it gives estimates for~$\cpi$.

This is the mechanism behind the constant-one result for the classical sequence spaces below.  It also explains why the non-scalar negative problem is delicate: one needs a PFP space for which the qualitative dichotomy holds, while the constants for the complemented copies forced by one side of the dichotomy can nevertheless escape to infinity.
\end{remark}

For an ultrafilter~$\mathcal U$, we write~$(A)^{\mathcal U}$ for the Banach-algebra ultrapower of~$A$.  Daws and the second-named author obtained the following quantitative characterisation.

\begin{theorem}[Daws and the second-named author]\label{thm:DH-criterion}
Let~$A$ be a unital Banach algebra with~$A\not\simeq\C$, and let~$\mathcal U$ be a countably incomplete ultrafilter. Then~$(A)^{\mathcal U}$ is purely infinite if and only if
\begin{equation}\label{eq:DH-uniform-bound}
  \sup\{\cpi^A(a):\ a\in A,\ \norm a=1\}<\infty.
\end{equation}
\end{theorem}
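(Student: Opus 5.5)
The proof runs each direction through the coordinatewise description of the ultrapower. Elements of $(A)^{\mathcal U}$ are represented by bounded families $(a_i)_{i\in I}$, with $\norm{(a_i)_{\mathcal U}}=\lim_{\mathcal U}\norm{a_i}$. The constant family gives an isometric unital embedding $A\hookrightarrow(A)^{\mathcal U}$. Since $A\not\simeq\C$, $A$ has dimension at least two, so $(A)^{\mathcal U}$ does too, and hence $(A)^{\mathcal U}\not\simeq\C$. Thus in both directions only the factorisation condition of Definition~\ref{def:pure-infinite} needs attention.

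For sufficiency, assume $\cpi^A(a)\le K$ whenever $\norm a=1$. Take a non-zero $x=(a_i)_{\mathcal U}$ and set $r=\norm x>0$. The set $I_0=\{i:\ \norm{a_i}>r/2\}$ belongs to $\mathcal U$. For $i\in I_0$, homogeneity \eqref{eq:cpi-homogeneity} gives $\cpi^A(a_i)\le K/\norm{a_i}<2K/r$. Choose $b_i,c_i$ with $b_ia_ic_i=1_A$ and $\norm{b_i}\norm{c_i}<2K/r$. Rescaling $b_i\mapsto tb_i$, $c_i\mapsto t^{-1}c_i$, we may assume $\norm{b_i},\norm{c_i}\le\sqrt{2K/r}$. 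For $i\notin I_0$ put $b_i=c_i=0$. Then $b=(b_i)_{\mathcal U}$ and $c=(c_i)_{\mathcal U}$ are bounded families, and $bxc=1$ because the coordinates equal $1_A$ on a set in $\mathcal U$.

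For necessity, suppose the supremum in \eqref{eq:DH-uniform-bound} is infinite. Choose unit vectors $a_n\in A$ with $\cpi^A(a_n)\ge n$, allowing the value $\infty$. Countable incompleteness gives a decreasing sequence $I_1\supseteq I_2\supseteq\cdots$ of sets in $\mathcal U$ with $\bigcap_nI_n=\emptyset$, and we may take $I_1=I$. Define $n(i)=\max\{n:\ i\in I_n\}$. Then $\{i:\ n(i)\ge m\}=I_m\in\mathcal U$ for every $m$, so $n(i)\to\infty$ along $\mathcal U$.

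Put $x=(a_{n(i)})_{\mathcal U}$, which has norm one. Suppose, for contradiction, that $bxc=1$ for some $b=(b_i)$, $c=(c_i)$ with $\sup_i\norm{b_i},\sup_i\norm{c_i}\le M$. Then the set of $i$ with $\norm{b_ia_{n(i)}c_i-1_A}<1/2$ lies in $\mathcal U$. For such $i$ a Neumann series shows that $w_i=b_ia_{n(i)}c_i$ is invertible with $\norm{w_i^{-1}}\le2$. Hence
\[
  (w_i^{-1}b_i)\,a_{n(i)}\,c_i=1_A,
\]
which gives $\cpi^A(a_{n(i)})\le2M^2$. This fails once $n(i)>2M^2$, and that happens on a set in $\mathcal U$. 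So no such $b,c$ exist, and $(A)^{\mathcal U}$ is not purely infinite.

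The main point needing care is this necessity step. The ultrapower identity $bxc=1$ only gives approximate factorisations coordinatewise, and we need the invertibility perturbation to turn them into exact ones with controlled norms. Countable incompleteness is what allows the unbounded constants to be packed into a single element $x$ with $n(i)\to\infty$.
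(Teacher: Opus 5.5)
Your proof is correct. The paper gives no argument of its own here; it simply cites \cite[Proposition~2.2]{DawsHorvath2022}, so your proposal supplies a self-contained proof where the paper defers to the reference.

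Your argument has two ingredients. For sufficiency you take coordinatewise factorisations $b_ia_ic_i=1_A$, rescaled so that the $b_i$ and $c_i$ are uniformly bounded on a set in $\mathcal U$. For necessity you pack unit vectors with $\cpi^A(a_n)\ge n$ into the single element $x=(a_{n(i)})_{\mathcal U}$, using the function $n(i)$ obtained from countable incompleteness. You then turn the approximate factorisations $b_ia_{n(i)}c_i\approx 1_A$ into exact ones by a Neumann-series inversion, which bounds $\cpi^A(a_{n(i)})$ on a set in $\mathcal U$. Written this way, the proof shows something the bare citation hides: the sufficiency direction works for every ultrafilter, and countable incompleteness is used only for necessity.

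One cosmetic point. If you do not assume $\norm{1_A}=1$, the estimate $\norm{w_i^{-1}}\le 2$ should read $\norm{w_i^{-1}}\le\norm{1_A}+1$. This changes nothing in the argument.
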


\begin{proof}
This is~\cite[Proposition~2.2]{DawsHorvath2022}.
\end{proof}

The UPFP gives a uniform estimate, but only after allowing the complementary element~$1-a$.

\begin{proposition}\label{prop:UPFP-quotient-dichotomy}
Let~$E$ be a complex Banach space with the~$C$-PFP. Put
\[
  A_E=\Bop(E)/\ME.
\]
Then, for every~$a\in A_E$,
\begin{equation}\label{eq:quotient-dichotomy}
  \cpi^{A_E}(a)\le C
  \qquad\hbox{or}\qquad
  \cpi^{A_E}(1_{A_E}-a)\le C.
\end{equation}
\end{proposition}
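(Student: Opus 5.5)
The plan is to lift $a$ to an operator, apply the $C$-PFP to that lift, and push the resulting factorisation down to the quotient, checking that the quotient map does not increase norms. Since $E$ has the PFP (the $C$-PFP implies it), Proposition~\ref{prop:PFP-maximal-ideal} shows that $\ME$ is a closed proper ideal. Hence $A_E$ is a unital Banach algebra with the quotient norm, and the quotient map $\pi\colon\Bop(E)\to A_E$ is a unital homomorphism with $\norm{\pi(S)}\le\norm S$ for every $S\in\Bop(E)$.

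Given $a\in A_E$, I choose any $T\in\Bop(E)$ with $\pi(T)=a$; the choice of lift is irrelevant. By the $C$-PFP there are $U,V\in\Bop(E)$ with $\norm U\norm V\le C$ such that either $UTV=\Id_E$ or $U(\Id_E-T)V=\Id_E$.

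\emph{First case.} Applying $\pi$ to $UTV=\Id_E$ gives $\pi(U)\,a\,\pi(V)=1_{A_E}$. Then \eqref{eq:cpi} yields
\[
\cpi^{A_E}(a)\le\norm{\pi(U)}\norm{\pi(V)}\le\norm U\norm V\le C.
\]
\emph{Second case.} Since $\pi(\Id_E-T)=1_{A_E}-a$, the same argument applied to $U(\Id_E-T)V=\Id_E$ gives $\cpi^{A_E}(1_{A_E}-a)\le C$.

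Two degenerate points need checking. First, the element appearing in the chosen alternative is automatically non-zero in $A_E$, since $1_{A_E}\neq0$; so the convention $\cpi(0)=\infty$ never interferes. In particular, if $a=0$ the second alternative must hold, and it gives $\cpi^{A_E}(1_{A_E})\le C$. Second, the quotient norm may fail to satisfy $\norm{1_{A_E}}=1$, but the definition \eqref{eq:cpi} uses the norm of $A_E$ as it stands, so this causes no problem.

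There is no real obstacle in this argument. The only points needing care are that the quotient map is contractive and that $\ME$ is closed (Proposition~\ref{prop:PFP-maximal-ideal}), so that $A_E$ is a genuine Banach algebra to which \eqref{eq:cpi} applies.
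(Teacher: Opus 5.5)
Your proposal is correct and matches the paper's proof: lift $a$ to $T$, apply the $C$-PFP, and push the factorisation through the contractive quotient map $\pi$. One aside: your caveat about $\norm{1_{A_E}}$ is unnecessary, since $\ME$ is a proper closed ideal and so $\norm{1_{A_E}}=1$ automatically. Indeed, if $\norm{\Id_E-S}<1$ for some $S\in\ME$, then $S$ would be invertible. As you note, however, this plays no role in the argument.
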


\begin{proof}
Let~$\pi\colon\Bop(E)\to A_E$ be the quotient map, and choose~$T\in\Bop(E)$ with~$\pi(T)=a$. By the~$C$-PFP, there are~$U,V\in\Bop(E)$ with~$\norm U\norm V\le C$ such that either
\[
  UTV=\Id_E
  \qquad\hbox{or}\qquad
  U(\Id_E-T)V=\Id_E.
\]  Applying~$\pi$ gives either
\[
  \pi(U)a\pi(V)=1_{A_E} \text{ or }\pi(U)(1_{A_E}-a)\pi(V)=1_{A_E}.
\]
Since~$\norm{\pi(U)}\norm{\pi(V)}\le\norm U\norm V\le C$, the conclusion follows.
\end{proof}

\begin{remark}\label{rem:gap}
The estimate \eqref{eq:quotient-dichotomy} is not the same as the uniform bound \eqref{eq:DH-uniform-bound}.  The latter requires a bound for~$\cpi^{A_E}(a)$ on the entire unit sphere. The former says only that, for each~$a$, either~$a$ or~$1-a$ has a controlled factorisation. Thus the UPFP does not by itself give a~formal proof that~$(A_E)^{\mathcal U}$ is purely infinite.
\end{remark}

The work of Daws and the second-named author also gives an obstruction to pure infiniteness of ultrapowers.

\begin{proposition}\label{prop:homomorphic-obstruction}
Let~$A$ be a unital Banach algebra.  Suppose that there are a Banach algebra~$B$ and a non-zero continuous algebra homomorphism
\[
  \theta\colon A\to B
\]
which is not bounded below. Then~$(A)^{\mathcal U}$ is not simple for every countably incomplete ultrafilter~$\mathcal U$.  In particular, $(A)^{\mathcal U}$ is not purely infinite.
\end{proposition}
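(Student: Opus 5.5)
Since $\theta$ is not bounded below, we can choose $a_n\in A$ with $\norm{a_n}=1$ and $\norm{\theta(a_n)}\to0$. Fix a countably incomplete ultrafilter $\mathcal U$; by countable incompleteness there is a sequence of sets in $\mathcal U$ with empty intersection. We may therefore assume $\mathcal U$ is indexed by $I$ and choose a function $n\colon I\to\N$ with $n(i)\to\infty$ along $\mathcal U$. Set $\mathbf a=(a_{n(i)})_{\mathcal U}\in(A)^{\mathcal U}$, which has norm one and so is non-zero.

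Next I will extend $\theta$ to ultrapowers. The formula $\Theta((x_i)_{\mathcal U})=(\theta(x_i))_{\mathcal U}$ defines a continuous algebra homomorphism $\Theta\colon(A)^{\mathcal U}\to(B)^{\mathcal U}$. It is well defined because $\theta$ is bounded, and it is multiplicative coordinatewise. Its kernel $\ker\Theta$ is a closed two-sided ideal of $(A)^{\mathcal U}$.

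Two facts then finish the argument. First, $\mathbf a\in\ker\Theta$, because $\norm{\theta(a_{n(i)})}\to0$ along $\mathcal U$; hence $\ker\Theta\neq\{0\}$. Second, $\ker\Theta$ is proper. Since $\theta$ is non-zero, pick $x$ with $\theta(x)\neq0$ and consider the constant element $(x)_{\mathcal U}$: its image has norm $\norm{\theta(x)}>0$, so $(x)_{\mathcal U}\notin\ker\Theta$. Thus $\ker\Theta$ is a non-zero proper ideal, and $(A)^{\mathcal U}$ is not simple. Pure infiniteness implies simplicity, as noted after Definition~\ref{def:pure-infinite}, so $(A)^{\mathcal U}$ is not purely infinite. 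In the complex setting the definition also requires $(A)^{\mathcal U}\not\simeq\C$, but non-simplicity already rules pure infiniteness out regardless.

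The main point needing care is the choice of $n(i)\to\infty$ along $\mathcal U$, which is exactly where countable incompleteness is used. Given sets $I_1\supseteq I_2\supseteq\cdots$ in $\mathcal U$ with empty intersection, put $n(i)=\max\{k: i\in I_k\}$ (with $n(i)=0$ when $i\notin I_1$). For each $k$ we then have $\{i:n(i)\ge k\}=I_k\in\mathcal U$, which is precisely the required convergence. Everything else is routine.
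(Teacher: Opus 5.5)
Your proof is correct, and it follows the paper's route: the paper simply cites the contrapositive of \cite[Proposition~2.8]{DawsHorvath2022} and then notes that purely infinite algebras are simple. You have written out a self-contained proof of that cited fact, by lifting $\theta$ to $\Theta$ on the ultrapowers, using countable incompleteness to place the norm-one element $(a_{n(i)})_{\mathcal U}$ in $\ker\Theta$, and keeping the constant element $(x)_{\mathcal U}$ out of it. The only cosmetic point is that $a_0$ must be defined (say $a_0:=a_1$), which is harmless because $I_1\in\mathcal U$.
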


\begin{proof}
This is the contrapositive of~\cite[Proposition~2.8]{DawsHorvath2022}: if a countably incomplete ultrapower of~$A$ is simple, then every non-zero continuous algebra homomorphism from~$A$ into a Banach algebra is bounded below.  Purely infinite Banach algebras are simple, so the final assertion follows.
\end{proof}

Applied to~$A=A_E$, Proposition~\ref{prop:homomorphic-obstruction} gives a concrete strategy for producing non-purely-infinite ultrapowers of the quotient: find a non-zero continuous homomorphism out of~$A_E$ which fails to be bounded below.

\begin{remark}\label{rem:homomorphic-rigidity}
This obstruction is stronger than may first appear in the present setting.  If~$E$ has the PFP, then Proposition~\ref{prop:PFP-maximal-ideal} makes~$A_E$ a simple Banach algebra. Consequently every non-zero continuous homomorphism~$A_E\to B$ is automatically injective.  Proposition~\ref{prop:homomorphic-obstruction} is therefore asking for an injective continuous representation of this Calkin-type algebra which is not bounded below, or equivalently for a strong failure of uniqueness of the quotient norm.  This rigidity is one reason that the non-scalar questions below are not merely a matter of finding more examples with the UPFP.
\end{remark}

\section{Examples, boundary cases and test problems}

We now organise the examples according to the distinction between scalar boundary cases and the genuinely non-scalar ultrapower problem.

\begin{proposition}\label{prop:scalar-boundary-PFP}
The non-scalar hypothesis in Theorem~\ref{thm:intro-main}\textup{(iii)} is necessary.  There is an infinite-dimensional complex Banach space~$E$ with the PFP such that
\[
  \bigl(\Bop(E)/\ME\bigr)^{\mathcal U}
\]
is not purely infinite for every ultrafilter~$\mathcal U$.
\end{proposition}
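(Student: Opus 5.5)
Take $E=C[0,\omega_1]$ (complex scalars). By Proposition~\ref{prop:C-omega-one-not-properly-infinite}, $E$ has the PFP and $\Bop(E)/\ME\simeq\C$. The statement therefore reduces to showing that, for every ultrafilter $\mathcal U$, the ultrapower $(\C)^{\mathcal U}$ is not purely infinite. Since Definition~\ref{def:pure-infinite} requires $A\not\simeq\C$, the task is to check that this ultrapower is either $\C$ itself or fails the factorisation condition.

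First I verify that the ultrapower of $\C$ is again $\C$. Take a bounded family $(\lambda_i)_{i\in I}$ of scalars, with $\sup_i|\lambda_i|\le R$. The limit $\lambda=\lim_{\mathcal U}\lambda_i$ exists by compactness of the closed disc of radius $R$, because every ultrafilter converges on a compact Hausdorff space. Then $\lim_{\mathcal U}|\lambda_i-\lambda|=0$, so the class of $(\lambda_i)$ equals the class of the constant family $(\lambda)$. Hence the diagonal embedding $\C\to(\C)^{\mathcal U}$ is a surjective isometric unital homomorphism, and $(\C)^{\mathcal U}\simeq\C$. This argument uses nothing about countable incompleteness, so it covers every ultrafilter, principal or free.

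Consequently, for every ultrafilter $\mathcal U$,
\[
  \bigl(\Bop(E)/\ME\bigr)^{\mathcal U}\simeq(\C)^{\mathcal U}\simeq\C,
\]
using that isomorphic Banach algebras have isomorphic ultrapowers. By the clause $A\not\simeq\C$ in Definition~\ref{def:pure-infinite}, this algebra is not purely infinite.

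Finally, necessity of the non-scalar hypothesis in Theorem~\ref{thm:intro-main}(iii) follows because this $E$ has the PFP but its quotient is not purely infinite. The only point needing care is that the ultrapower is formed in the category of Banach algebras (bounded families modulo $\mathcal U$-null ones), so that the compactness argument above applies.
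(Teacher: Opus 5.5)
Your proposal is correct and follows the paper's proof essentially verbatim. You take $E=C[0,\omega_1]$, use Proposition~\ref{prop:C-omega-one-not-properly-infinite} to get the PFP and the scalar quotient $\C$, and identify the ultrapower of $\C$ with $\C$ via the ultralimit map; your compactness argument simply spells out that last step, which the paper states in one line.
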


\begin{proof}
Take~$E=C[0,\omega_1]$.  By Proposition~\ref{prop:C-omega-one-not-properly-infinite}, $E$ has the PFP and
\[
  \Bop(E)/\ME\simeq\C.
\]
The ultrapower of the one-dimensional Banach algebra~$\C$ is again isometrically isomorphic to $\C$, via the ultralimit map on bounded scalar families.  Since $\C$ is not purely infinite by Definition~\ref{def:pure-infinite}, the result follows.
\end{proof}

\begin{remark}[A finite-dimensional scalar UPFP boundary]\label{rem:scalar-boundary-UPFP}
There is also a degenerate finite-dimensional scalar UPFP boundary case.  Take $E=\C$.  Then $\Bop(E)\simeq\C$ and $\ME=\{0\}$, so $\Bop(E)/\ME\simeq\C$.  Moreover, $\C$ has the $2$-PFP: for each $\lambda\in\C$, either $|\lambda|\ge1/2$ or $|1-\lambda|\ge1/2$, and hence the identity factors through multiplication by $\lambda$ or by $1-\lambda$ with product of norms at most $2$.  Thus $E$ has the UPFP, while the ultrapower of the quotient is again $\C$ and is therefore not purely infinite.
\end{remark}

The preceding examples are useful only as boundary cases: they show that the scalar alternative must be excluded.  They do not address the problem suggested by Theorem~\ref{thm:DH-criterion}, where the original quotient is required to be purely infinite.

\begin{question}\label{q:non-scalar-PFP}
Does there exist an infinite-dimensional complex Banach space~$E$ with the PFP such that
\[
  A_E=\Bop(E)/\ME\not\simeq\C
\]
and, for some countably incomplete ultrafilter~$\mathcal U$, the ultrapower~$(A_E)^{\mathcal U}$ is not purely infinite?
\end{question}

Equivalently, by Theorem~\ref{thm:DH-criterion}, Question~\ref{q:non-scalar-PFP} asks whether there is such an $E$ for which $\cpi^{A_E}$ is unbounded on the unit sphere of $A_E$.

\begin{question}\label{q:non-scalar-UPFP}
Does there exist an infinite-dimensional complex Banach space~$E$ with the UPFP such that
\[
  A_E=\Bop(E)/\ME\not\simeq\C
\]
and, for some countably incomplete ultrafilter~$\mathcal U$, the ultrapower~$(A_E)^{\mathcal U}$ is not purely infinite?
\end{question}

For Question~\ref{q:non-scalar-UPFP}, Proposition~\ref{prop:UPFP-quotient-dichotomy} shows exactly what is missing: one has a uniform factorisation for $a$ or for $1-a$, but the criterion of Daws and the second-named author requires a uniform factorisation for $a$ itself.  Proposition~\ref{prop:homomorphic-obstruction} gives a possible route to a positive answer: it would suffice to find a non-zero continuous homomorphism
\[
  \theta\colon A_E\to B
\]
which is not bounded below.

\begin{remark}\label{rem:nonscalar-UPFP-tests}
The non-scalar UPFP problem is not vacuous.  Recent work of the first- and third-named authors develops general UPFP machinery for symmetric and iterated direct sums \cite{AcuavivaKania2026}, and a subsequent preprint of the first-named author proves that $\ell_p(C[0,1])$ has the UPFP for $1\le p\le\infty$ \cite{Acuaviva2026LpCK}.  These spaces are isomorphic to their squares, and so Proposition~\ref{prop:no-finite-dimensional-quotient-square} shows that the corresponding quotient $A_E$ is not scalar.  Thus, they are natural non-scalar test cases for Question~\ref{q:non-scalar-UPFP}.  What is not obtained merely from the UPFP is a uniform bound for $\cpi^{A_E}(a)$ on the quotient unit sphere; this would require a one-sided quantitative estimate for $a$, not only the dichotomy between $a$ and $1-a$.
\end{remark}

\begin{proposition}\label{prop:square-zero-obstruction}
Let $E$ be a Banach space, and suppose that there are closed two-sided ideals $\ideal N\subsetneq \ideal I\subseteq \Bop(E)$ such that $\ideal I$ has codimension one in $\Bop(E)$ and $\ideal I^2\subseteq\ideal N$.  Then $E$ does not have the UPFP.
\end{proposition}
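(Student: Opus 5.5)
The plan is to argue by contradiction. Assume that $E$ has the $C$-PFP, and derive a uniform bound on $|\lambda|$ for a family $\lambda T$ with $T\in\ideal I\setminus\ideal N$. The square-zero condition makes the identity's factorisation through $\Id_E-\lambda T$ essentially linear in $\lambda$ modulo $\ideal N$, which will clash with the uniform bound $\norm U\norm V\le C$ once $|\lambda|$ is large.

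\emph{Setup.} Since $\ideal I$ is a closed ideal of codimension one, $\Bop(E)=\K\Id_E\oplus\ideal I$. The quotient map $\Bop(E)\to\Bop(E)/\ideal I\cong\K$ is a continuous unital character $\varphi$ with $\ker\varphi=\ideal I$. Hence every $S\in\Bop(E)$ splits as $S=\varphi(S)\Id_E+S_0$ with $S_0\in\ideal I$, and $\norm{S_0}\le(1+\norm\varphi)\norm S$. By Proposition~\ref{prop:PFP-maximal-ideal}, the PFP makes $\ME$ the largest proper ideal, so $\ideal I\subseteq\ME$. Since $\ideal I$ has codimension one and $\ME$ is proper, we get $\ideal I=\ME$. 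Fix $T\in\ideal I\setminus\ideal N$, and let $\pi\colon\Bop(E)\to\Bop(E)/\ideal N$ be the quotient map. Because $\ideal N$ is closed, $\delta:=\norm{\pi(T)}>0$.

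\emph{Key computation.} For every scalar $\lambda$, the operator $\lambda T$ lies in $\ME$, so the $C$-PFP gives $U,V$ with $\norm U\norm V\le C$ and $U(\Id_E-\lambda T)V=\Id_E$. Write $U=\alpha\Id_E+U_0$ and $V=\beta\Id_E+V_0$ with $U_0,V_0\in\ideal I$. Expand and discard every product of two elements of $\ideal I$, since these lie in $\ideal N$; the terms $U_0V_0$, $U_0T$, $TV_0$ and $U_0TV_0$ are all of this kind. This gives
\[
  \Id_E\equiv\alpha\beta\,\Id_E+\alpha V_0+\beta U_0-\lambda\alpha\beta\,T\pmod{\ideal N}.
\]
Applying $\varphi$, which vanishes on $\ideal N\subseteq\ideal I$, yields $\alpha\beta=1$. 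Substituting back, we obtain $\lambda T\equiv\alpha V_0+\beta U_0\pmod{\ideal N}$.

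\emph{Estimate and conclusion.} Taking quotient norms, we have $|\lambda|\delta\le|\alpha|\norm{V_0}+|\beta|\norm{U_0}$. Using $|\alpha|\le\norm\varphi\norm U$, $|\beta|\le\norm\varphi\norm V$ and the bounds on $\norm{U_0}$ and $\norm{V_0}$, the right-hand side is at most $2\norm\varphi(1+\norm\varphi)\norm U\norm V\le 2\norm\varphi(1+\norm\varphi)C$. This bound is independent of $\lambda$, so letting $|\lambda|\to\infty$ contradicts $\delta>0$.

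The only delicate point is the norm bookkeeping: the final bound must be in terms of the product $\norm U\norm V$, not the individual norms. This works because each surviving term pairs a scalar part of one factor with the $\ideal I$-part of the other. The argument is the same over $\mathbb R$ and $\C$.
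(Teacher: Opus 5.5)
Your proof is correct and is essentially the paper's argument. The paper likewise applies the $C$-PFP to $\Id_E-tS$ with $S\in\ideal I\setminus\ideal N$, splits $U,V$ into scalar and $\ideal I$-parts, uses the character to get $\alpha\beta=1$, and linearises modulo $\ideal N$ to bound $|t|\,\norm{q(S)}$ independently of $t$. The only differences are cosmetic: you compute in $\Bop(E)$ modulo $\ideal N$ rather than in the quotient algebra, and you only need that the proper ideal $\ideal I$ contains no operator factoring $\Id_E$, so the appeal to $\ideal I=\ME$ is not required.
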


\begin{proof}
Put~$A=\Bop(E)$ and suppose, towards a contradiction, that~$E$ has the~$C$-PFP for some~$C\ge 1$.  Let~$q\colon A\to A/\ideal N$ be the quotient homomorphism and set~$B=q[\ideal I]$. Then~$B$ is a non-zero square-zero ideal of~$A/\ideal N$. Since~$\ideal I$ has codimension one, there is a character~$\chi\colon A\to\K$ with kernel~$\ideal I$ and~$\chi(\Id_E)=1$; as~$\ideal N\subseteq\ideal I$, it descends to a continuous character~$\widetilde\chi$ on~$A/\ideal N$ with kernel~$B$.

Choose~$S\in\ideal I$ with~$q(S)\ne0$. Fix~$t\in\K$, $t\ne0$, and apply the~$C$-PFP to~$T_t=\Id_E-tS$. The alternative in which the identity factors through~$\Id_E-T_t=tS$ is impossible, because~$tS$ lies in the proper ideal~$\ideal I$. Hence there are~$U_t,V_t\in A$ such that~$U_t(\Id_E-tS)V_t=\Id_E$ and~$\norm{U_t}\norm{V_t}\le C$.

Pass to~$A/\ideal N$. Write~$u=q(U_t)$, $v=q(V_t)$, $b=tq(S)$, $\alpha=\widetilde\chi(u)$ and~$\beta=\widetilde\chi(v)$. Clearly~$u(1-b)v=1$. We \textit{claim} that~$\alpha\beta=1$. Indeed, $U_tSV_t \in \ideal I$ as~$S \in \ideal I$, and therefore
\[
\widetilde\chi(ubv) =t\widetilde\chi(q(U_tSV_t)) = t \chi(U_tSV_t) = 0,
\]
which together with
\[
1=\chi(\Id_E) = \widetilde\chi(1) = \widetilde\chi(u(1-b)v) = \widetilde\chi(u) \widetilde\chi(v) - \widetilde\chi(ubv) = \alpha \beta - \widetilde\chi(ubv)
\]
yields the claim.

Decompose~$u=\alpha1+u_0$ and~$v=\beta1+v_0$, with~$u_0,v_0\in B$. Since~$B^2=0$ and~$b\in B$, by the Claim the equation~$u(1-b)v=1$ reduces to~$b=\alpha v_0+\beta u_0$. Therefore
%
%
\[
  |t|\norm{q(S)}=\norm b\le |\alpha|\norm{v_0}+|\beta|\norm{u_0}
  \le 2\norm{\widetilde\chi}(\norm{\widetilde\chi}+1)\,\norm u\norm v+2\norm1
  \le 2C\norm{\widetilde\chi}(\norm{\widetilde\chi}+1)+2\norm1.
\]
The right-hand side is independent of~$t$, a contradiction as~$|t|\to\infty$. Hence~$E$ cannot have the UPFP.
\end{proof}

\begin{theorem}\label{thm:Read-not-UPFP}
Read's space~$\ER$ does not have the UPFP.
\end{theorem}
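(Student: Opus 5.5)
The plan is to apply Proposition~\ref{prop:square-zero-obstruction} with $E=\ER$, using Read's ideal together with the weakly compact operators. The two ideals will be $\ideal N=\Wideal(\ER)$, the closed ideal of weakly compact operators, and $\ideal I$, Read's codimension-one ideal. The key input is the Laustsen--Skillicorn strongly split exact sequence
\[
  0\longrightarrow \Wideal(\ER)\longrightarrow \Bop(\ER)\longrightarrow \widetilde{\ell}_2\longrightarrow0,
\]
where $\widetilde{\ell}_2=\ell_2\oplus\C1$ carries the zero product on $\ell_2$. I will define $\ideal I$ as the preimage of $\ell_2$ (embedded as the augmentation ideal of the unitisation) under the quotient map $\theta\colon\Bop(\ER)\to\widetilde{\ell}_2$. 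This coincides with Read's ideal, but only the properties listed below are needed.

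The hypotheses then check off as follows.
\begin{enumerate}
\item $\ideal I$ is a two-sided ideal, since it is the preimage of an ideal under a homomorphism.
\item $\ideal I$ is closed: $\theta$ is continuous, because it is induced by the quotient map modulo the closed ideal $\Wideal(\ER)$, via the isomorphism of Banach algebras $\Bop(\ER)/\Wideal(\ER)\simeq\widetilde{\ell}_2$.
\item $\ideal I$ has codimension one, because $\ell_2$ has codimension one in $\widetilde{\ell}_2$ and $\theta$ is surjective.
\item $\ideal N=\Wideal(\ER)=\ker\theta$ is closed and contained in $\ideal I$.
\item The inclusion is strict, since $\ell_2\neq\{0\}$.
\item For $S,T\in\ideal I$ we have $\theta(ST)=\theta(S)\theta(T)=0$, because the product on $\ell_2$ is zero. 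Hence $ST\in\Wideal(\ER)$, and since $\ideal N$ is a closed subspace, $\ideal I^2\subseteq\ideal N$.
\end{enumerate}
Proposition~\ref{prop:square-zero-obstruction} then gives directly that $\ER$ does not have the UPFP.

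The main obstacle is the accurate invocation of the literature, not the argument itself. One must make sure that the Laustsen--Skillicorn description holds as an isomorphism of Banach algebras, or at least that the kernel of the induced homomorphism is exactly $\Wideal(\ER)$, so that $\ideal N$ is closed and $\ideal I^2\subseteq\ideal N$. One must also check that the field is complex, or that the argument is insensitive to the field, since Proposition~\ref{prop:square-zero-obstruction} is stated over $\K$. If only Read's original description is available, I would fall back on his statement: $\Bop(\ER)=\ideal I\oplus\C\Id$, and $\ideal I/\Wideal(\ER)$ is an infinite-dimensional Hilbert space with zero multiplication. This yields the same three facts, namely codimension one, strict inclusion, and square-zero modulo $\Wideal(\ER)$.
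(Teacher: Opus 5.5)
Your proposal is correct and follows the paper's proof: both apply Proposition~\ref{prop:square-zero-obstruction} with $\ideal I$ equal to Read's codimension-one ideal and $\ideal N=\Wideal(\ER)$. The only difference is that you check the hypotheses through the Laustsen--Skillicorn description $\ideal I=\psi^{-1}[\ell_2]$, whereas the paper cites Read's Section~4 directly; the paper itself later notes that these two ideals coincide.
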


\begin{proof}
Read constructs a closed codimension-one ideal~$\JR$ in~$\Bop(\ER)$ with the following stronger property.  Let
\[
   \pi\colon \Bop(\ER)\longrightarrow \Bop(\ER)/\Wideal(\ER)
\]
be the quotient map.  Then~$\pi[\JR]$ is a non-zero closed square-zero ideal; equivalently,
\[
  \Wideal(\ER)\subsetneq \JR
  \qquad\hbox{and}\qquad
  \JR^2\subseteq \Wideal(\ER).
\]
Read proves that the corresponding ideal~$I$ in~$\Bop(\ER)/\Wideal(\ER)$ is closed, codimension one and satisfies~$I^2=0$; its inverse image~$\JR=\pi^{-1}[I]$ is therefore a closed codimension-one ideal of~$\Bop(\ER)$, see~\cite[Section~4]{Read1989}.

Apply Proposition~\ref{prop:square-zero-obstruction} with
\[
   \ideal I=\JR,
   \qquad
   \ideal N=\Wideal(\ER).
\]
This proves that~$\ER$ does not have the UPFP.
\end{proof}

\begin{remark}\label{rem:Read-obstruction-geometric}
The obstruction in Proposition~\ref{prop:square-zero-obstruction} is the quantitative core of the geometric difficulty suggested by Read's construction.  Uniform primary factorisation would force uniformly bounded factorisations of $\Id_E$ through every scalar perturbation $\Id_E-tS$, where $S$ lies in the codimension-one ideal.  Passing to the square-zero quotient linearises these factorisations and shows that the vector $tq(S)$ would have to remain uniformly bounded.  Thus the failure of UPFP is already visible in the weak Calkin algebra, without constructing an~explicit weakly compact perturbation of the identity.
\end{remark}

\begin{lemma}\label{lem:Read-forward-inclusion}
Let \(\psi\colon\Bop(\ER)\to\widetilde{\ell}_2\) be the
Laustsen--Skillicorn quotient homomorphism, and put \(\IR=\psi^{-1}[\ell_2]\).
Then
\[
   \IR\subseteq \EuScript{M}_{\ER}.
\]
Equivalently, no operator in Read's codimension-one ideal \(\IR\) factors the
identity.
\end{lemma}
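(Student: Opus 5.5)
The argument is that $\IR$ is a proper two-sided ideal of $\Bop(\ER)$, and a proper ideal can never contain an operator through which the identity factors. The steps are as follows.

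First, I check that $\IR$ is a two-sided ideal. The Laustsen--Skillicorn map $\psi\colon\Bop(\ER)\to\widetilde{\ell}_2$ is a unital algebra homomorphism. In the unitisation $\widetilde{\ell}_2=\ell_2\oplus\C 1$, the summand $\ell_2$ is a two-sided ideal, in fact square-zero, since $(x+\lambda 1)y=\lambda y\in\ell_2$ for $y\in\ell_2$. Preimages of two-sided ideals under homomorphisms are two-sided ideals, so $\IR=\psi^{-1}[\ell_2]$ is one.

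Second, I check properness. Since $\psi$ is unital, $\psi(\Id_{\ER})=1\notin\ell_2$, so $\Id_{\ER}\notin\IR$. It is also worth noting that $\IR$ has codimension one: composing $\psi$ with the canonical character $\widetilde{\ell}_2\to\C$, $x+\lambda1\mapsto\lambda$, gives a character on $\Bop(\ER)$ whose kernel is exactly $\IR$.

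Third, I apply the ideal argument. Suppose $T\in\IR$ and $U,V\in\Bop(\ER)$ satisfy $UTV=\Id_{\ER}$. Then $\Id_{\ER}=UTV\in\IR$, because $\IR$ is a two-sided ideal, and this contradicts properness. Hence $T\in\EuScript{M}_{\ER}$. Equivalently, using the character $\chi$ from the second step, $1=\chi(\Id_{\ER})=\chi(U)\chi(T)\chi(V)=0$, which is absurd. This is the same reasoning as in the last paragraph of the proof of Proposition~\ref{prop:PFP-maximal-ideal}, and it does not need the PFP.

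No step here is a real obstacle. The only point requiring care is confirming that the homomorphism $\psi$ supplied by Laustsen and Skillicorn is unital and surjective onto $\widetilde{\ell}_2$, with $\ell_2$ sitting as the non-unital square-zero part. This follows from their strongly split exact sequence, and only unitality is actually used.
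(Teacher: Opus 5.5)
Your proof is correct and is essentially the paper's argument. The paper writes $\psi(U)=\eta+\alpha1$ and $\psi(V)=\zeta+\beta1$, then uses the zero product to get $\psi(UTV)=\alpha\beta\psi(T)$, which has scalar part zero, whereas $\psi(\Id_{\ER})=1$ has scalar part one; this is your character computation $1=\chi(U)\chi(T)\chi(V)=0$ written out explicitly. Your formulation via the preimage ideal is slightly cleaner, since it only needs $\ell_2$ to be a proper ideal of $\widetilde{\ell}_2$, not a square-zero one.
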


\begin{proof}
Let \(T\in\IR\), so \(\psi(T)\in\ell_2\).  Suppose that \(UTV=\Id_{\ER}\) for
some \(U,V\in\Bop(\ER)\).  Write \(\psi(U)=\eta+\alpha1\) and
\(\psi(V)=\zeta+\beta1\), with \(\eta,\zeta\in\ell_2\).  Since \(\ell_2\) has
zero product in its unitisation, \(\psi(U)\psi(T)\psi(V)=\alpha\beta\psi(T)\)
has scalar part zero, whereas \(\psi(\Id_{\ER})=1\) has scalar part one.  This
contradiction proves the claim.
\end{proof}

\begin{proposition}\label{prop:Read-reduction}
Let $\ER$ be Read's Banach space, and let
\[
  \psi\colon\Bop(\ER)\longrightarrow\widetilde{\ell}_2
\]
be the Laustsen--Skillicorn quotient homomorphism onto the unitisation of $\ell_2$ endowed with zero product \cite{LaustsenSkillicorn2017}.  Thus
\[
  \ker\psi=\Wideal(\ER),
\]
and the short exact sequence
\[
  0\longrightarrow \Wideal(\ER)\longrightarrow \Bop(\ER)\stackrel{\psi}{\longrightarrow}\widetilde{\ell}_2\longrightarrow0
\]
splits strongly.  Put
\[
  \IR=\psi^{-1}[\ell_2],
\]
the codimension-one ideal obtained by pulling back the codimension-one ideal $\ell_2$ of $\widetilde{\ell}_2$; this is the same ideal as $\JR$ above.

Then $\ER$ has the PFP if and only if, for every $W\in\Wideal(\ER)$, the identity on $\ER$ factors through $\Id_{\ER}-W$.  If these equivalent conditions hold, then
\[
  \EuScript{M}_{\ER}=\IR
  \qquad\hbox{and}\qquad
  \Bop(\ER)/\EuScript{M}_{\ER}\simeq\C.
\]
\end{proposition}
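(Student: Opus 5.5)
The plan is to prove both directions through Proposition~\ref{prop:PFP-maximal-ideal}, with Lemma~\ref{lem:Read-forward-inclusion} as the key ingredient.

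For the forward direction, suppose $\ER$ has the PFP and let $W\in\Wideal(\ER)$. Since $\psi(W)=0$, we have $W\in\IR$. By Lemma~\ref{lem:Read-forward-inclusion}, the identity does not factor through $W$. The PFP applied to $T=W$ then forces the identity to factor through $\Id_{\ER}-W$.

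The converse is where the structure of $\widetilde{\ell}_2$ matters. Assume that the identity factors through $\Id_{\ER}-W$ for every weakly compact $W$, and take an arbitrary $T\in\Bop(\ER)$. Write $\psi(T)=\xi+\lambda 1$ with $\xi\in\ell_2$. Then exactly one of $\lambda$ and $1-\lambda$ may vanish, so by symmetry (replacing $T$ by $\Id_{\ER}-T$) we may assume $\lambda\neq0$. The aim is to show that the identity factors through $T$. In $\widetilde{\ell}_2$ the element $\xi+\lambda1$ is invertible, with inverse $\lambda^{-1}1-\lambda^{-2}\xi$, because $\xi^2=0$.

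Choose any $S\in\Bop(\ER)$ with $\psi(S)=\lambda^{-1}1-\lambda^{-2}\xi$; surjectivity suffices here, though the strong splitting would also provide one. Then $\psi(ST)=1$, so $ST=\Id_{\ER}-W$ for some $W\in\Wideal(\ER)$. By hypothesis there are $U,V$ with $U(\Id_{\ER}-W)V=\Id_{\ER}$, that is, $(US)TV=\Id_{\ER}$. Hence the identity factors through $T$, and the PFP holds.

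For the final assertions, assume the PFP. Then $\EuScript{M}_{\ER}$ is the largest proper ideal by Proposition~\ref{prop:PFP-maximal-ideal}. The ideal $\IR$ is proper, since it has codimension one, so $\IR\subseteq\EuScript{M}_{\ER}$; Lemma~\ref{lem:Read-forward-inclusion} gives the same inclusion directly. Because $\IR$ has codimension one and $\EuScript{M}_{\ER}$ is proper, maximality of $\IR$ forces $\EuScript{M}_{\ER}=\IR$. Consequently $\Bop(\ER)/\EuScript{M}_{\ER}\simeq\widetilde{\ell}_2/\ell_2\simeq\C$. The identification $\IR=\JR$ is taken from the statement, via Read's description of his ideal modulo $\Wideal(\ER)$.

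The only step needing care is the converse. There the main point is lifting the inverse of $\psi(T)$. I expect no obstruction, since $\psi$ is surjective and the zero product makes $\xi+\lambda1$ invertible whenever $\lambda\neq0$.
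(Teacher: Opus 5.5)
Your proposal is correct and follows essentially the same route as the paper. For the converse, you lift the inverse $\lambda^{-1}1-\lambda^{-2}\xi$ of $\psi(T)$ to reduce to the hypothesis on $\Id_{\ER}-W$. For $\EuScript{M}_{\ER}=\IR$, you combine Lemma~\ref{lem:Read-forward-inclusion}, Proposition~\ref{prop:PFP-maximal-ideal} and the codimension-one property of $\IR$. The only difference is cosmetic: in the forward direction you apply the PFP directly to $W\in\IR$, rather than first identifying $\EuScript{M}_{\ER}=\IR$ as the paper does, which is slightly more economical.
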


\begin{proof}
The structural assertions about $\psi$, its kernel and the strongly split exact sequence are due to Laustsen and Skillicorn, strengthening Read's original analysis \cite{Read1989,LaustsenSkillicorn2017}.  We prove the stated equivalence.

Assume first that $\ER$ has the PFP.  By Proposition~\ref{prop:PFP-maximal-ideal}, $\EuScript{M}_{\ER}$ is the largest proper ideal of $\Bop(\ER)$.  Lemma~\ref{lem:Read-forward-inclusion} gives $\IR\subseteq\EuScript{M}_{\ER}$, and since $\IR$ has codimension one, we get $\IR=\EuScript{M}_{\ER}$.  If $W\in\Wideal(\ER)$, then
\[
  \psi(\Id_{\ER}-W)=1,
\]
so $\Id_{\ER}-W\notin\IR=\EuScript{M}_{\ER}$.  Hence the identity factors through $\Id_{\ER}-W$.

Conversely, suppose that $\Id_{\ER}$ factors through $\Id_{\ER}-W$ for every $W\in\Wideal(\ER)$.  Let $T\in\Bop(\ER)$, and write
\[
  \psi(T)=\xi+\lambda 1
  \qquad(\xi\in\ell_2,\ \lambda\in\C).
\]
If $\lambda=0$, then $\psi(\Id_{\ER}-T)=-\xi+1$, so it is enough to prove that the identity factors through every operator whose image under $\psi$ has non-zero scalar part.  Assume therefore that $\lambda\ne0$.  Since $\ell_2$ has zero product in $\widetilde{\ell}_2$, the element $\xi+\lambda1$ is invertible, with inverse
\[
  \lambda^{-1}1-\lambda^{-2}\xi.
\]
By surjectivity of $\psi$, choose $S\in\Bop(\ER)$ such that
\[
  \psi(S)=\lambda^{-1}1-\lambda^{-2}\xi.
\]
Then $\psi(ST)=1$, and therefore $ST=\Id_{\ER}-W$ for some $W\in\Wideal(\ER)$.  By the hypothesis, there are $U,V\in\Bop(\ER)$ such that
\[
  U(\Id_{\ER}-W)V=\Id_{\ER}.
\]
Consequently
\[
  USTV=\Id_{\ER},
\]
so the identity factors through $T$.  Thus, for every $T$, the identity factors through $T$ or through $\Id_{\ER}-T$, and $\ER$ has the PFP.

If these conditions hold, the preceding argument also shows that every operator outside $\IR$ factors the identity.  No operator in $\IR$ can factor the identity by Lemma~\ref{lem:Read-forward-inclusion}.  Hence $\EuScript{M}_{\ER}=\IR$, and the quotient is $\widetilde{\ell}_2/\ell_2\simeq\C$.
\end{proof}

\begin{question}\label{q:Read-PFP}
Does Read's space $\ER$ have the PFP?  Equivalently, does every weakly compact perturbation $\Id_{\ER}-W$ fix a complemented copy of $\ER$ through which the identity factors?
\end{question}

\begin{remark}\label{rem:Read-consequence}
The known weak Calkin algebra structure does not appear to settle Question~\ref{q:Read-PFP}; it reduces the question to the complemented-copy assertion in Proposition~\ref{prop:Read-reduction}.  Theorem~\ref{thm:Read-not-UPFP} shows, however, that any affirmative answer would necessarily be non-uniform.  In particular, it would answer the question of Acuaviva and the third-named author whether there is a Banach space with the PFP but without the UPFP \cite[Question~7.4]{AcuavivaKania2026}.  Such a positive answer would strengthen the scalar boundary examples by adding a space $E$ for which $\Bop(E)$ admits discontinuous homomorphisms into Banach algebras \cite{Read1989}.  It would not answer Question~\ref{q:non-scalar-PFP} or Question~\ref{q:non-scalar-UPFP}, since Proposition~\ref{prop:Read-reduction} would force the quotient $\Bop(\ER)/\EuScript{M}_{\ER}$ to be scalar.
\end{remark}

\begin{proposition}\label{prop:classical-sequence-spaces-positive}
Let \(\Gamma\) be an infinite set, let \(E\) be either \(c_0(\Gamma)\) or
\(\ell_p(\Gamma)\) for some \(1\le p<\infty\), and put \(\kappa=|\Gamma|\).
Then
\[
  \ME=\Kideal_\kappa(E),
\]
where \(\Kideal_\kappa(E)\) denotes the ideal of \(\kappa\)-compact operators
on \(E\).  Moreover,
\[
  \cpi^{\Bop(E)/\Kideal_\kappa(E)}(a)=1
  \qquad (\norm a=1).
\]
Consequently, for every countably incomplete ultrafilter \(\mathcal U\),
\[
  \bigl(\Bop(E)/\ME\bigr)^{\mathcal U}
\]
is purely infinite.  In particular, these classical sequence spaces do not solve
Question~\ref{q:non-scalar-PFP} or Question~\ref{q:non-scalar-UPFP}.
\end{proposition}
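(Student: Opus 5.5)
Throughout, write $X_\Gamma$ for the standard basis norm-one subspace and, for $\Delta\subseteq\Gamma$, $P_\Delta$ for the coordinate projection onto $\overline{\mathrm{span}}\{e_\gamma:\gamma\in\Delta\}$, which is contractive and has range isometric to $E_\Delta:=c_0(\Delta)$ or $\ell_p(\Delta)$. Recall that $\Kideal_\kappa(E)$ consists of operators whose range is not ``large'': $T\in\Kideal_\kappa(E)$ iff $T$ maps the unit ball into a set of density $<\kappa$ (equivalently, for these spaces, $T$ does not fix a copy of $E$). The plan has three steps: identify $\ME$ with $\Kideal_\kappa(E)$, prove the constant-one estimate, then conclude via Theorem~\ref{thm:DH-criterion}.

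\emph{Step 1: $\ME=\Kideal_\kappa(E)$.} If $UTV=\Id_E$ then $TV$ is an isomorphic embedding of $E$, so $T$ has range of density $\kappa$ and $T\notin\Kideal_\kappa(E)$. Conversely, for $T\notin\Kideal_\kappa(E)$ I would invoke the classical structure theory (Gramsch/Luft for $\ell_2$, and its extension by Daws to $c_0(\Gamma)$ and $\ell_p(\Gamma)$): $\Kideal_\kappa(E)$ is the largest proper ideal of $\Bop(E)$, so the ideal generated by $T$ contains $\Id_E$. To get a single factorisation rather than a sum, one uses the standard transfinite gliding-hump argument: find $\{x_\gamma\}_{\gamma\in\Gamma'}$, $|\Gamma'|=\kappa$, of norm one with disjointly supported (after small perturbation) images $Tx_\gamma$ bounded below; this block sequence is equivalent to the unit vector basis and $1$-complemented. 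This also yields the PFP via Proposition~\ref{prop:PFP-maximal-ideal}.

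\emph{Step 2: $\cpi(a)=1$ for $\norm a=1$.} The lower bound is trivial: $1=\norm{bac}\le\norm b\norm a\norm c$. For the upper bound, fix $\varepsilon>0$ and choose a representative $T$ of $a$ with $\norm T<1+\varepsilon$. The key is a quantitative refinement of Step 1: since the essential norm $\norm{a}=1$, for any set $\Delta$ with $|\Delta|<\kappa$ we have $\norm{T-TP_\Delta-P_\Delta T}$-type estimates showing $T$ acts with norm close to $1$ ``outside'' every small set. A transfinite recursion of length $\kappa$ then produces norm-one $x_\gamma$, supported on pairwise disjoint sets, whose images $Tx_\gamma$ are, up to errors summable to $\varepsilon$, disjointly supported with $\norm{Tx_\gamma}\ge1-\varepsilon$. (For $\kappa$ uncountable one uses that each vector has countable support, so at each stage only $<\kappa$ coordinates are used.) Then $V\colon e_\gamma\mapsto x_\gamma$ and the map $U$ sending $Tx_\gamma$'s disjoint support blocks back to $e_\gamma$ (composed with a norm-one projection onto the block span and normalisation) satisfy $\norm U\norm V\le(1-\varepsilon)^{-1}+O(\varepsilon)$ and $UTV=\Id_E$ up to a small perturbation, which is corrected by a Neumann series at cost $1+O(\varepsilon)$. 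Passing to the quotient gives $\cpi(a)\le1+O(\varepsilon)$.

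\emph{Step 3.} By Step 2 the supremum in \eqref{eq:DH-uniform-bound} equals $1$; since $E\simeq E\oplus E$, Proposition~\ref{prop:no-finite-dimensional-quotient-square} gives $\Bop(E)/\ME\not\simeq\C$, so Theorem~\ref{thm:DH-criterion} yields pure infiniteness of every countably incomplete ultrapower. The main obstacle is Step 2: ensuring the transfinite disjointification keeps constants arbitrarily close to $1$, particularly choosing the representative $T$ so that the quotient norm (not merely $\norm T$) controls $\inf\norm{Tx}$ on tails, and handling the $c_0$ case where complementation of block sequences is automatic but lower estimates require care with the sup-norm.
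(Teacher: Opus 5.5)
Your proposal follows the paper's proof: for coordinate sets $\Delta,\Delta'$ of size $<\kappa$ the quotient-norm bound $\norm{(\Id_E-P_\Delta)T(\Id_E-P_{\Delta'})}\ge\norm{a}$ drives a transfinite gliding hump, $V\colon e_\gamma\mapsto x_\gamma$ has norm one, and your left inverse (block restriction, norm-one projection, normalisation) is exactly the paper's map $Ry=(f_\gamma(y))_\gamma$ with the $f_\gamma$ the normalised disjoint norming functionals of the blocks; the only difference is that you allow small cross terms and remove them with a Neumann series, whereas the paper asserts exact biorthogonality $f_\gamma(Tx_\eta)=\delta_{\gamma\eta}$. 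One minor fix: deduce $\Kideal_\kappa(E)\subseteq\ME$ from $\Kideal_\kappa(E)$ being a proper ideal, as the paper does, rather than from range density, because this closed ideal contains operators whose range has density $\kappa$ (for example, infinite-rank compact operators when $\Gamma$ is countable).
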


\begin{proof}
Let \(\pi\colon\Bop(E)\to \Bop(E)/\Kideal_\kappa(E)\) be the quotient map.
For these spaces, Daws's description of the closed ideals gives that
\(\Kideal_\kappa(E)\) is the largest proper closed ideal of \(\Bop(E)\)
\cite[Theorem~7.4]{Daws2006}; here \(\Kideal_\kappa(E)\) is understood in the
usual ``fewer than \(\kappa\) coordinates'' sense.  We shall use the quantitative
form of the block-basis extraction contained in the proof of that theorem.  Namely,
if \(T\in\Bop(E)\) and \(0<\delta<\norm{\pi(T)}\), then there are a subset
\(K\subseteq\Gamma\) with \(|K|=\kappa\), norm-one vectors
\((x_\gamma)_{\gamma\in K}\) with pairwise disjoint supports, and finitely
supported functionals \((f_\gamma)_{\gamma\in K}\subseteq E^*\) with pairwise
disjoint supports such that \(\norm{f_\gamma}\le\delta^{-1}\) and
\(f_\gamma(Tx_\eta)=\delta_{\gamma\eta}\) for \(\gamma,\eta\in K\).

For countable \(\Gamma\) this is the usual gliding-hump selection.  In the
uncountable case, the point is that the quotient-norm inequality is quantitative:
after deleting any set of coordinates of cardinality less than \(\kappa\), the
remaining part of \(T\) still has norm exceeding \(\delta\).  At each step one
therefore chooses a unit vector supported outside the previously used coordinates
so that \(Tx_\gamma\) has a coordinate block of norm greater than \(\delta\), and
then chooses a finitely supported norming functional of norm at most
\(\delta^{-1}\).  Daws' \(\Delta\)-system and cardinal pigeonhole thinning then
leave \(\kappa\)-many indices for which both the vector supports and the functional
supports are pairwise disjoint, while the estimates
\(\norm{f_\gamma}\le\delta^{-1}\) and
\(f_\gamma(Tx_\eta)=\delta_{\gamma\eta}\) are preserved.

Write \(E_K\) for \(c_0(K)\) in the \(c_0\)-case and for \(\ell_p(K)\) in the
\(\ell_p\)-case.  Define \(V\colon E_K\to E\) by \(Ve_\gamma=x_\gamma\), and
\(R\colon E\to E_K\) by \(Ry=(f_\gamma(y))_{\gamma\in K}\).  Disjointness of
supports gives \(\norm V=1\) and \(\norm R\le\delta^{-1}\); in the
\(\ell_p\)-case this is the usual \(\ell_p\)-sum estimate over the disjoint
supports, while in the \(c_0\)-case the finite disjoint supports imply
\((f_\gamma(y))\in c_0(K)\), because a vector in \(c_0(\Gamma)\) is large on
only finitely many coordinates.  The biorthogonality above gives \(RTV=\Id_{E_K}\).  Since
\(|K|=|\Gamma|\), we identify \(E_K\) isometrically with \(E\), and hence
\(\Id_E\) factors through \(T\) with product of norms at most \(\delta^{-1}\).
Thus \(T\notin\ME\) whenever \(T\notin\Kideal_\kappa(E)\).  The converse is
immediate because \(\Kideal_\kappa(E)\) is a proper operator ideal and
\(\Id_E\notin\Kideal_\kappa(E)\), so \(\ME=\Kideal_\kappa(E)\).

Now take \(a=\pi(T)\) with \(\norm a=1\).  The preceding paragraph, applied
with arbitrary \(0<\delta<1\), gives \(b,c\in\Bop(E)/\Kideal_\kappa(E)\) such
that \(bac=1\) and \(\norm b\norm c\le\delta^{-1}\).  Letting
\(\delta\uparrow1\) yields \(\cpi(a)\le1\).  The reverse inequality follows
from \(1=bac\), \(\norm a=1\), and \(\norm 1=1\) in every unital quotient by a
proper closed ideal.  Hence \(\cpi(a)=1\) on the quotient unit sphere, and
Theorem~\ref{thm:DH-criterion} gives pure infiniteness of every countably
incomplete ultrapower.

Finally, each of the spaces under consideration is isomorphic to its square, so
Proposition~\ref{prop:no-finite-dimensional-quotient-square} shows that the
quotient is non-scalar.  The same estimates also give the UPFP: for every
\(T\in\Bop(E)\), either \(\norm{\pi(T)}\ge1/2\) or
\(\norm{1-\pi(T)}\ge1/2\), and then the identity factors through \(T\) or
\(\Id_E-T\) with a uniform bound.  Thus these spaces lie on the positive
non-scalar UPFP side of the problem.
\end{proof}

We finish with an infinite-dimensional scalar UPFP boundary case.

Let $J$ denote the complex James space, with its usual basis $(e_n)_{n=1}^\infty$, and put
\[
  s_n=e_1+\ldots+e_n\qquad(n\in\N).
\]
The sequence $(s_n)_{n=1}^\infty$ is the spreading basis used by Andrew; it converges in the weak-star topology of $J^{**}$ to the element denoted below by ${\bf 1}$.  We identify $J^{**}$ with $J\oplus\C{\bf 1}$ in the usual way.

\begin{lemma}[Andrew's theorem, with constants tracked]\label{lem:Andrew-uniform}
For every $\delta>0$ there is a constant $C_\delta$ with the following property.  Let $(z_n)_{n=1}^\infty$ be a bounded sequence in $J$ which converges in the weak-star topology of $J^{**}$ to
\[
  z^{**}=\lambda {\bf 1}+y,
  \qquad y\in J,
\]
where $|\lambda|\ge\delta$.  Then there are integers
\(
  n_1<n_2<\ldots
\)
and an operator $R\in\Bop(J)$ such that
\[
  Rz_{n_k}=s_k\qquad(k\in\N)
\]
and $\norm R\le C_\delta$.
\end{lemma}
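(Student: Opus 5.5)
The plan is to follow Andrew's original gliding-hump argument and record the constants at each stage.

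\emph{Step 1: reduce to a weakly null perturbation of $\lambda s_n$.} Let $P\colon J^{**}\to J$ be the bounded projection with kernel $\C{\bf 1}$, coming from the identification $J^{**}=J\oplus\C{\bf 1}$; we have $\norm{P}$ finite and a bound $|\mu|\le c_0\norm{\mu{\bf 1}+x}$ for the scalar coordinate. Since $(s_n)$ converges weak-star to ${\bf 1}$, the sequence $w_n:=z_n-\lambda s_n-y$ is weakly null in $J$. Boundedness of $(z_n)$ gives $|\lambda|\le c_0\sup_n\norm{z_n}$. After normalising we may assume $\sup_n\norm{z_n}\le1$; the constant $C_\delta$ is meant to depend only on $\delta$ under this normalisation. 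For the fixed vector $y$, we choose $n_1$ so large that the tail of $y$ beyond $n_1$ has norm below a prescribed $\varepsilon_1$. Passing to a subsequence, a standard perturbation of $w_n$ turns it into a block sequence $(u_k)$ with respect to $(e_n)$, with $\sum_k\norm{w_{n_k}-u_k}$ as small as we like.

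\emph{Step 2: build $R$ from spreading and blocking.} Choose the $n_k$ so that the supports interlace. Then $z_{n_k}\approx y+\lambda s_{n_k}+u_k$, where $u_k$ is supported in a block $(m_k,m_k']$ and $n_k\le m_k<m_k'<n_{k+1}$. Define the functionals $f_k:=e^*_{n_k+1}$, or better the difference of coordinate functionals that reads off the jump of $s_{n_k}$. Concretely, use $g_k(x)=x_{n_k}$ applied along the summing basis: for $x=\sum a_je_j$, set $Rx=\lambda^{-1}\sum_k\bigl(\sigma_{n_k}(x)-\sigma_{n_{k-1}}(x)\bigr)\cdot(\text{basis of }s)$, where $\sigma_m(x)$ denotes a suitable coordinate functional that equals $\lambda$ on $s_{n_j}$ for $j\ge k$ and vanishes on the blocks $u_j$ and on the tail of $y$.

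The natural choice in James space is to take $\sigma_m$ to be the partial-sum evaluation at the end of a gap. An operator of the form $x\mapsto\sum_k(\text{difference of evaluations at points }p_k<q_k)e_k$ is bounded on $J$ with norm controlled by the quadratic-variation norm, uniformly in the choice of points. This is the key feature of $J$: the quadratic variation of increments over disjoint intervals is dominated by $\norm{x}_J$. We then compose with the isomorphism of $J$ sending $(e_k)$ to $(s_k)$ up to the correct normalisation. This yields an $R$ with $Rz_{n_k}=s_k+\text{error}_k$, where $\norm{R}\le C/|\lambda|\le C/\delta$ and the errors are summably small.

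\emph{Step 3: remove the errors.} This is done with a small-perturbation/Neumann-series argument. We choose $\varepsilon_k$ so that the operator $D$ mapping $s_k\mapsto \text{error}_k$, defined via the biorthogonal functionals of $(s_k)$, has norm below $1/2$. Then $(\Id+D)^{-1}R$ works and has norm at most $2C/\delta$.

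The main obstacle is Step 2. One must ensure that the evaluation points can be placed in the gaps between the supports of $u_k$ and the tails of $y$, so that they annihilate those contributions exactly while picking up exactly $\lambda$ from each $s_{n_k}$. One must also check that the resulting operator's norm depends only on the $J$-norm inequality, and not on the particular sequence. I would first check this on finitely supported vectors using the definition of the James norm by sums $\sum(a_{p_i}-a_{p_{i+1}})^2$.
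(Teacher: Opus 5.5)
\textbf{There is a genuine gap in Step 2.} The step that fails is how you bound $R$. You factor $R$ as a difference-sampling map $x\mapsto\sum_k(x_{p_k}-x_{q_k})e_k$, followed by ``the isomorphism of $J$ sending $(e_k)$ to $(s_k)$''. No bounded operator on $J$ satisfies $e_k\mapsto s_k$. Indeed, $e_k=s_k-s_{k-1}\to{\bf 1}-{\bf 1}=0$ weak-star, so $(e_k)$ is weakly null, whereas $e_1^*(s_k)=1$ for every $k$. Concretely, $\norm{\sum_{k\le n}e_k}=\norm{s_n}$ stays bounded while $\norm{\sum_{k\le n}s_k}\ge n$.

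The difference map is bounded from $J$ into $\ell_2\subseteq J$, but placing $\ell_2$ coefficients on $(s_k)$ is unbounded: the coefficients $1/k$ already give a divergent series.

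Two further claims in Steps 2--3 are also unavailable.
\begin{enumerate}
\item You cannot arrange $n_k\le m_k<m_k'<n_{k+1}$. Where $w_n$ lives relative to $n$ is dictated by the sequence, not by your choice of subsequence. For $z_n=\lambda s_n+e_{\lfloor n/2\rfloor}$, every block $u_k$ sits strictly before $n_k$.
\item The contribution of $y$ is not summably small. Whatever evaluation points you use, it is the \emph{same} vector $\lambda^{-1}Ry$ in every $Rz_{n_k}$.
\end{enumerate}

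\textbf{How to repair it.} All of this can be fixed, and the repaired argument is a genuinely different and more elementary route than the paper's.
\begin{enumerate}
\item Drop the differences. For $p_1<p_2<\cdots$, the sampling operator $Sx=\sum_k x_{p_k}e_k$ is a contraction on $J$, since every chain of indices for $(x_{p_k})_k$ is a chain for $x$. Moreover $Ss_{n_j}=s_j$ as soon as $n_{k-1}<p_k\le n_k$ for all $k$, with the convention $n_0=N$.
\item In the gliding hump, put $p_k=\max(n_{k-1},b_{k-1})+1$, where $b_{k-1}$ is the right end of the previous block. Then take $n_k\ge p_k$ so large that $\norm{P_{p_k}w_{n_k}}<\varepsilon_k$, so the $k$th block can start after $p_k$. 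Now $S$ annihilates every block.
\item For the $y$-term, $v:=\lambda^{-1}Sy$ satisfies $\norm v\le\delta^{-1}\norm{(\Id-P_N)y}$. Let $s_k^*$ be the coordinate functionals of the basis $(s_k)$ and $\eta_k$ the summable errors. The operator $D=v\otimes e_1^*+\sum_k\eta_k\otimes s_k^*$ maps $s_k$ to $v+\eta_k$, because $e_1^*(s_k)=1$.
\end{enumerate}
With these changes your Step 3 goes through and gives $\norm R\le2/\delta$.

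The normalisation $\sup_n\norm{z_n}\le1$ is not without loss of generality. Rescaling replaces $\delta$ by $\delta/\sup_n\norm{z_n}$, and the lemma is later applied to $z_n=Ts_n$ with $\norm T$ arbitrary. The sampling construction never uses it, so simply drop it.

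\textbf{Comparison with the paper.} The paper instead follows Andrew's proof:
\begin{itemize}
\item the Andrew--Green multiplier structure together with the deletion operator $D_N$ make the $y$-part a small multiplier;
\item Andrew's left inverse $L$ is applied to $u_k=P_{m_k}({\bf 1}+y_0)$;
\item his norm-one projection $Q$ kills the blocks;
\item a perturbation automorphism finishes the argument.
\end{itemize}
Your repaired route avoids all of this machinery and yields an explicit constant.
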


\begin{proof}
Andrew proves the corresponding qualitative selection theorem in \cite[Theorem~2.1 and Proposition~2.7]{Andrew1981}.  We recall the part of the proof where the constants enter.

It is enough to prove the result for $\lambda=1$, since replacing $z_n$ by
$\lambda^{-1}z_n$ changes the final bound by at most $\delta^{-1}$.  Andrew
uses the coordinatewise multiplication on $J^{**}$; the required algebraic
background is due to Andrew and Green \cite{AndrewGreen1980}.  In particular
$J^{**}$ is identified with a~multiplier algebra acting on $J$, and
multiplication by an element of $J$ has norm controlled by its James norm.

Let $P_N$ be the $N$th basis projection and denote by $D_N$ the coordinate-deletion
operator $D_N(\sum a_i e_i)=\sum a_{i+N}e_i$.  This operator is a contraction:
every difference-sum which occurs in the James norm of $D_Nx$ is a difference-sum
of $x$ using coordinates beyond the first $N$.  Choose $N$ so large that
$y_0=D_N(\Id-P_N)y$ has multiplier norm at most $1/4$, and put
$\widetilde z_n=D_Nz_n$.  Then $\widetilde z_n\to {\bf 1}+y_0$ weak-star in
$J^{**}$.  Thus discarding the first $N$ coordinates has introduced only the
fixed contraction $D_N$; no later estimate will depend on $N$.

Since ${\bf 1}+y_0$ lies in a fixed neighbourhood of the identity multiplier, it
is invertible as a~multiplier and both it and its inverse are bounded by an
absolute constant.  Andrew's proof of \cite[Proposition~2.5]{Andrew1981},
applied to this fixed neighbourhood, gives an absolute constant $K$ with the
following property: for every sufficiently far-out increasing sequence $(m_k)$,
the sequence
\[
  u_k=P_{m_k}({\bf 1}+y_0)\qquad(k\in\N)
\]
is $K$-equivalent to $(s_k)$ and admits a left inverse $L$ with
\[
  Lu_k=s_k\qquad(k\in\N),
  \qquad \norm L\le K.
\]
The important point is that $K$ depends only on the fixed neighbourhood of the
identity multiplier, not on the original vector $y$ or on the discarded integer
$N$.

We now run Andrew's gliding-hump argument from
\cite[Proposition~2.7]{Andrew1981} on the sequence $(\widetilde z_n)$.  After
passing to a subsequence, we may choose increasing integers $m_k$ and successive
block vectors $w_k$ so that
\[
  \sum_{k=1}^\infty
  \norm{\widetilde z_{n_k}-(u_k+w_k)}<\varepsilon,
\]
where $\varepsilon>0$ will be chosen below.  Andrew's norm-one projection from
\cite[Lemma~2.6]{Andrew1981} annihilates the block part and fixes the $u_k$'s;
denote it by $Q$.  Thus $Q(u_k+w_k)=u_k$ for every $k$.  Put $R_0=LQ$.  Then
$R_0(u_k+w_k)=s_k$ and $\norm{R_0}\le K$.  Taking $\varepsilon$ so small that
\[
  \sum_{k=1}^\infty\norm{R_0\widetilde z_{n_k}-s_k}<1/2,
\]
the small-perturbation principle for the basic sequence $(s_k)$ gives an
automorphism $A$ of $J$ with $A R_0\widetilde z_{n_k}=s_k$ for every $k$ and
$\norm A\le2$.  Hence $R=AR_0D_N$ satisfies $Rz_{n_k}=s_k$ and has norm at most
$2K$ in the normalised case $\lambda=1$, and at most $2K\delta^{-1}$ in general.

Andrew's proof is written for real scalars, but the estimates above use only absolute values, coordinate projections and bounded multipliers.  The same proof therefore applies to the complex James space.
\end{proof}

\begin{theorem}[The James space]\label{thm:James-UPFP}
The complex James space $J$ has the UPFP.  Moreover,
\[
  \MJ=\Wideal(J)
  \qquad\hbox{and}\qquad
  \Bop(J)/\MJ\simeq\C,
\]
where $\Wideal(J)$ denotes the ideal of weakly compact operators on $J$.
\end{theorem}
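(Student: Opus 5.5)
The plan is to use the known structure of $\Bop(J)$ modulo the weakly compact operators. By results of Edelstein--Mityagin/Loy--Willis (in the form recalled with Andrew's work), $\Wideal(J)$ has codimension one in $\Bop(J)$, and there is a character $\chi\colon\Bop(J)\to\C$ with $\ker\chi=\Wideal(J)$. It is given by $T^{**}{\bf 1}=\chi(T){\bf 1}+y_T$ with $y_T\in J$, using $J^{**}=J\oplus\C{\bf 1}$. The statement then splits into three steps.

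First, $\Wideal(J)\subseteq\MJ$. This holds because $J$ is not reflexive: if $UWV=\Id_J$ with $W$ weakly compact, then $\Id_J$ is weakly compact, which is impossible. Second, given $T$ with $|\chi(T)|\ge\delta$, I will construct $U,V$ with $UTV=\Id_J$ and $\norm U\norm V\le C'_\delta\norm T$-type bounds. Third, I will derive the UPFP. For any $T$, either $|\chi(T)|\ge1/2$ or $|\chi(\Id_J-T)|=|1-\chi(T)|\ge1/2$, so Step~2 with $\delta=1/2$ applies to $T$ or to $\Id_J-T$ with constant $C_{1/2}$. This also shows that every operator outside $\Wideal(J)$ factors the identity. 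Together with Step~1 this gives $\MJ=\Wideal(J)$, and hence $\Bop(J)/\MJ\simeq\C$ because $\Wideal(J)$ has codimension one.

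For Step~2 I take $z_n:=Ts_n$. This sequence is bounded by $\norm T\sup_n\norm{s_n}$, and $s_n\to{\bf 1}$ weak-star, so $z_n\to T^{**}{\bf 1}=\chi(T){\bf 1}+y_T$ weak-star. Lemma~\ref{lem:Andrew-uniform} then yields $n_1<n_2<\ldots$ and $R$ with $\norm R\le C_\delta$ and $RTs_{n_k}=s_k$. It remains to produce $V\in\Bop(J)$ with $Vs_k=s_{n_k}$ and $\norm V$ bounded by an absolute constant. Since $(s_n)$ is a spreading basis of $J$ (equivalent to every subsequence, with constants uniform in the subsequence), the map $s_k\mapsto s_{n_k}$ extends to a bounded operator. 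This is the step I would verify most carefully. One has $s_{n_k}=\sum_{j\le n_k}e_j$, and the James norm of $\sum_k a_k s_{n_k}$ can be compared with that of $\sum_k a_k s_k$ by grouping coordinates into blocks $(n_{k-1},n_k]$ on which the vector is constant. Difference sums over constant blocks reduce to difference sums of the coefficient sequence, so I expect an absolute constant, possibly after a harmless change involving the first coordinate. Finally, set $U:=R$ and $V$ as above, which gives $UTV=\Id_J$ on the basis $(s_k)$ and hence on $J$.

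The main obstacle is bookkeeping of constants. Lemma~\ref{lem:Andrew-uniform} gives $\norm R\le C_\delta$ independently of $\norm T$ only through its stated hypotheses, but the gliding-hump step there may implicitly use a bound on $\sup\norm{z_n}$. I would therefore first normalise by replacing $T$ with $T/\max(1,\norm T)$. Under the PFP dichotomy with $|\chi(T)|\ge1/2$, this could shrink $\chi$ below $\delta$. So I would instead handle large $\norm T$ by the observation that if $\norm T$ is huge, one of $T$ and $\Id_J-T$ still has $|\chi|\ge1/2$ and norm comparable to $\norm T$. Absorbing the factor $\norm T$ into $U$ is then harmless, since the $C$-PFP bound need not be scale-invariant only if the lemma's constant truly ignores $\sup\norm{z_n}$. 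If it does not, I would rescale the vectors $z_n$, rather than $T$, inside the lemma. This keeps the final bound depending only on $\delta=1/2$.
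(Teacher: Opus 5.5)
Your argument is correct and is essentially the paper's proof. It has the same ingredients: Lemma~\ref{lem:Andrew-uniform} applied to $z_n=Ts_n\to\chi(T){\bf 1}+y_T$, the spreading operator $s_k\mapsto s_{n_k}$ (which has norm one, since repeating coordinates blockwise does not change the James norm), and the split $|\chi(T)|\ge1/2$ or $|1-\chi(T)|\ge1/2$. The only difference is that you get $\MJ\subseteq\Wideal(J)$ by factoring the identity through every $T$ with $\chi(T)\neq0$, whereas the paper cites Laustsen's theorem that every proper closed ideal lies in $\Wideal(J)$.

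Your closing worry is moot, because the constant $C_\delta$ in Lemma~\ref{lem:Andrew-uniform} depends only on $\delta$, not on $\sup_n\norm{z_n}$. Drop that paragraph and the phrase ``$C'_\delta\norm T$-type bounds'', because the proposed fallbacks would fail. Absorbing a factor $\norm T$ into $U$ would destroy exactly the uniformity the $C$-PFP demands. Rescaling the $z_n$ shrinks $|\lambda|$ just as rescaling $T$ does.
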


\begin{proof}
Laustsen proved that $\Wideal(J)$ is the unique maximal ideal of $\Bop(J)$; since $J$ is quasi-reflexive of order one, this ideal has codimension one \cite[Theorem~4.16]{Laustsen2002}.  Thus every $T\in\Bop(J)$ can be written uniquely in the form
\[
  T=\lambda \Id_J+W
  \quad \big(\lambda\in\C,
  W\in\Wideal(J)\big).
\]
Let $T\in\Bop(J)$.  If $|\lambda|\ge1/2$, apply Lemma~\ref{lem:Andrew-uniform} to the bounded sequence $z_n=Ts_n$.  Indeed, $s_n\to {\bf 1}$ weak-star in $J^{**}$, and weak compactness of $W$ gives
\[
  Ts_n\longrightarrow \lambda {\bf 1}+y
  \quad\hbox{weak-* in }J^{**}
\]
for some $y\in J$.  Hence there are $n_1<n_2<\ldots$ and $R\in\Bop(J)$, with $\norm R\le C_{1/2}$, such that
\[
  R T s_{n_k}=s_k\qquad(k\in\N).
\]
Let $V\in\Bop(J)$ be defined by $Vs_k=s_{n_k}$.  Here we use the spreading, equivalently subsymmetric, nature of the summing basis $(s_n)$; with the usual normalisation we have $\norm V=1$.  Therefore
\[
  R T V=\Id_J.
\]
If $|\lambda|<1/2$, then $|1-\lambda|\ge1/2$, and the preceding argument applied to $\Id_J-T$ gives a~factorisation of $\Id_J$ through $\Id_J-T$ with the same bound.  Hence $J$ has the UPFP.

It remains to identify the ideal $\MJ$.  Since $J$ has the UPFP, it has the PFP, so Proposition~\ref{prop:PFP-maximal-ideal} says that $\MJ$ is the largest proper ideal of $\Bop(J)$.  No weakly compact operator can factor the identity on the non-reflexive space $J$, and hence $\Wideal(J)\subseteq\MJ$.  Conversely, Laustsen's uniqueness theorem says that every proper closed ideal is contained in $\Wideal(J)$.  Thus $\MJ=\Wideal(J)$, and the quotient is $\C$.
\end{proof}

\begin{corollary}\label{cor:James-scalar-answer}
There is an infinite-dimensional complex Banach space $E$ with the UPFP such that
\[
  \bigl(\Bop(E)/\ME\bigr)^{\mathcal U}
\]
is not purely infinite for every ultrafilter $\mathcal U$.
\end{corollary}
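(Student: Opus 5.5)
The plan is to take $E=J$, the complex James space, and combine Theorem~\ref{thm:James-UPFP} with the observation already used in Proposition~\ref{prop:scalar-boundary-PFP}. Theorem~\ref{thm:James-UPFP} supplies the two inputs we need. First, $J$ is an infinite-dimensional complex Banach space with the UPFP. Second, the ideal $\MJ$ coincides with $\Wideal(J)$ and $\Bop(J)/\MJ\simeq\C$. So the corollary reduces entirely to understanding ultrapowers of the one-dimensional Banach algebra $\C$.

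Next we identify $(\C)^{\mathcal U}$ for an arbitrary ultrafilter $\mathcal U$ on an index set $I$. Since $\C$ is one-dimensional, the isomorphism $\Bop(J)/\MJ\simeq\C$ is automatically a Banach-algebra isomorphism, i.e.\ a bicontinuous algebra isomorphism. It therefore induces a bicontinuous algebra isomorphism between the ultrapowers, namely $(\Bop(J)/\MJ)^{\mathcal U}\simeq(\C)^{\mathcal U}$. On $\ell_\infty(I)$ we use the map $(\lambda_i)_i\mapsto\lim_{\mathcal U}\lambda_i$. This limit exists because bounded sets of scalars are relatively compact. The map is a unital algebra homomorphism, and its kernel is exactly the null ideal $c_{\mathcal U}$. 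It is isometric on the quotient, since $\norm{(\lambda_i)+c_{\mathcal U}}=\lim_{\mathcal U}|\lambda_i|$, and it is surjective via constant families. Hence $(\C)^{\mathcal U}\simeq\C$.

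Finally, Definition~\ref{def:pure-infinite} excludes algebras isomorphic to $\C$. So $(\Bop(J)/\MJ)^{\mathcal U}$ is not purely infinite, and this holds for every ultrafilter $\mathcal U$, whether principal or countably incomplete.

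No step here is a real obstacle, because all the substantive work is contained in Theorem~\ref{thm:James-UPFP}. That theorem rests on Lemma~\ref{lem:Andrew-uniform} and on Laustsen's identification of $\Wideal(J)$ as the unique maximal ideal of codimension one. The only point needing care is that the ultrapower construction respects isomorphisms of Banach algebras. This is routine: a bounded isomorphism with bounded inverse acts coordinatewise on $\ell_\infty(I;A)$ and preserves the null ideal.
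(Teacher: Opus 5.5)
Your proposal is correct and is the paper's proof: take $E=J$, use Theorem~\ref{thm:James-UPFP} to get the UPFP and $\Bop(J)/\MJ\simeq\C$, and observe that every ultrapower of $\C$ is again $\C$, which Definition~\ref{def:pure-infinite} excludes. Your explicit check via the ultralimit map, which works for principal as well as countably incomplete ultrafilters, simply expands the one-line step the paper states without detail.
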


\begin{proof}
Take $E=J$.  By Theorem~\ref{thm:James-UPFP}, $\Bop(J)/\MJ\simeq\C$.  The ultrapower of $\C$ is again $\C$, and $\C$ is not purely infinite by Definition~\ref{def:pure-infinite}.
\end{proof}

\begin{remark}
The James-space example is scalar. It shows that the scalar alternative persists even for infinite-dimensional spaces with the UPFP, but it does not affect the non-scalar problem in Question~\ref{q:non-scalar-UPFP}.
\end{remark}


\begin{thebibliography}{99}

\bibitem{Acuaviva2026LpCK}
A.~Acuaviva,
\emph{Primariness of the spaces $\ell_p(C(K))$ for $1\le p\le\infty$},
arXiv:2605.29854 (2026).
\url{https://doi.org/10.48550/arXiv.2605.29854}.

\bibitem{AcuavivaKania2026}
A.~Acuaviva and T.~Kania,
\emph{Primariness and the primary factorisation property},
arXiv:2605.21711 (2026).
\url{https://doi.org/10.48550/arXiv.2605.21711}.

\bibitem{Andrew1981}
A.~D. Andrew,
\emph{Spreading basic sequences and subspaces of James' quasi-reflexive space},
Math. Scand. \textbf{48} (1981), 109--118.
\url{https://doi.org/10.7146/math.scand.a-11904}.

\bibitem{AndrewGreen1980}
A.~D. Andrew and W.~L. Green,
\emph{On James' quasi-reflexive Banach space as a Banach algebra},
Canad. J. Math. \textbf{32} (1980), no.~5, 1080--1101.
\url{https://doi.org/10.4153/CJM-1980-083-7}.

\bibitem{bonsall}
F.~F.~Bonsall and J.~Duncan,
\emph{Complete Normed Algebras},
Springer-Verlag, New York, 1973.

\bibitem{caradus}
S.~R.~Caradus, W.~E.~Pfaffenberger and B.~Yood,
\emph{Calkin Algebras and Algebras of Operators on Banach Spaces},
Marcel Dekker, Inc., New York, 1974.

\bibitem{Daws2006}
M.~Daws,
\emph{Closed ideals in the Banach algebra of operators on classical non-separable spaces},
Math. Proc. Cambridge Philos. Soc. \textbf{140} (2006), no.~2, 317--332.
\url{https://doi.org/10.1017/S0305004105009102}.

\bibitem{DawsHorvath2022}
M.~Daws and B.~Horv\'ath,
\emph{A purely infinite Cuntz-like Banach $*$-algebra with no purely infinite ultrapowers},
J. Funct. Anal. \textbf{283} (2022), no.~1, Paper No.~109488, 36 pp.
\url{https://doi.org/10.1016/j.jfa.2022.109488}.

\bibitem{GowersMaurey}
W.~T.~Gowers and B.~Maurey,
\emph{Banach spaces with small spaces of operators},
Math. Ann. \textbf{307} (1997), 543--568.
\url{https://doi.org/10.1007/s002080050050}.

\bibitem{Horvath}
B.~Horv\'ath,
\emph{Banach spaces whose algebras of operators are Dedekind-finite but they do not have
stable rank one}, Banach Algebras and Applications (Proceedings of the International Conference held at the University of Oulu, July 3-11, 2017) (editor: M. Filali). De Gruyter (2020), 165--176.
\url{https://doi.org/10.1515/9783110602418-009}.

\bibitem{KaniaKoszmiderLaustsen2015}
T.~Kania, P.~Koszmider and N.~J. Laustsen,
\emph{Banach spaces whose algebra of bounded operators has the integers as their \(K_0\)-group},
J. Math. Anal. Appl. \textbf{428} (2015), no.~1, 282--294.
\url{https://doi.org/10.1016/j.jmaa.2015.03.021}.

\bibitem{KaniaLaustsen2012}
T.~Kania and N.~J. Laustsen,
\emph{Uniqueness of the maximal ideal of the Banach algebra of bounded operators on $C([0,\omega_1])$},
J. Funct. Anal. \textbf{262} (2012), no.~11, 4831--4850.
\url{https://doi.org/10.1016/j.jfa.2012.03.011}.

\bibitem{Koszmider2004}
P.~Koszmider,
\emph{Banach spaces of continuous functions with few operators},
Math. Ann. \textbf{330} (2004), no.~1, 151--183.
\url{https://doi.org/10.1007/s00208-004-0544-z}.

\bibitem{Laustsen2002}
N.~J. Laustsen,
\emph{Maximal ideals in the algebra of operators on certain Banach spaces},
Proc. Edinburgh Math. Soc. (2) \textbf{45} (2002), no.~2, 523--546.
\url{https://doi.org/10.1017/S0013091500001097}.

\bibitem{Laustsen2003}
N.~J. Laustsen,
\emph{On ring-theoretic (in)finiteness of Banach algebras of operators on Banach spaces},
Glasgow Math. J. (1) \textbf{45} (2003), no.~1, 11--19.
\url{https://doi.org/10.1017/S0017089502008947}.

\bibitem{LaustsenSkillicorn2017}
N.~J. Laustsen and R.~Skillicorn,
\emph{Extensions and the weak Calkin algebra of Read's Banach space admitting discontinuous derivations},
Studia Math. \textbf{236} (2017), no.~1, 51--62.
\url{https://doi.org/10.4064/sm8554-9-2016}.

\bibitem{LoyWillis1989}
R.~J. Loy and G.~A. Willis,
\emph{Continuity of derivations on $\Bop(E)$ for certain Banach spaces $E$},
J. London Math. Soc. (2) \textbf{40} (1989), no.~2, 327--346.
\url{https://doi.org/10.1112/jlms/s2-40.2.327}.

\bibitem{Read1989}
C.~J. Read,
\emph{Discontinuous derivations on the algebra of bounded operators on a Banach space},
J. London Math. Soc. (2) \textbf{40} (1989), no.~2, 305--326.
\url{https://doi.org/10.1112/jlms/s2-40.2.305}.

\bibitem{Tarbard}
M. Tarbard,
\emph{Operators on Banach Spaces of Bourgain-Delbaen Type},
Ph.D. Thesis, University of Oxford (2013).
\url{https://arxiv.org/abs/1309.7469}.

\end{thebibliography}
\end{document}